\newtheorem{theorem}{Theorem}[section]
\theoremstyle{definition}
\newtheorem{definition}[theorem]{Definition}
\newtheorem{example}[theorem]{Example}
\newtheorem{corollary}[theorem]{Corollary}
\theoremstyle{remark}
\numberwithin{equation}{section}
\begin{document}

\title{a more general framework than the $\delta$-primary hyperideals  }

%    Information for first author
\author{M. Anbarloei}
%    Address of record for the research reported here
\address{Department of Mathematics, Faculty of Sciences,
Imam Khomeini International University, Qazvin, Iran.
}
%    Current address

\email{m.anbarloei@sci.ikiu.ac.ir }
%    \thanks will become a 1st page footnote.

%    Information for second author
%\author{}
%\address{}
%\email{}
%\thanks{Support information for the second author.}

%    General info
\subjclass[2010]{ 16Y99}

%\date{September  , 2013.}

%\dedicatory{This paper is dedicated to our advisors.}
\keywords{  $(t,n)$-absorbing $\delta$-semiprimary  hyperideal, weakly $(t,n)$-absorbing $\delta$-semiprimary  hyperideal, $\delta$-$(t,n)$-zero.}
%------------------------------------------------------------------------------
%%%%%%%%%%%%%%%%%%%%%%%%%%%%%%%%%%%%%%%%%%%%%%%%%%%%%%%%%%%%%%%%%%%%%%%%%%%%%%%%%%%%%%%%%%%%%%%%%%%%%%

%%%%%%%%%%%%%%%%%%%%%%%%%%%%%%%%%%%%%%%%%%%%%%%%%%%%%%%%%%%%%%%%%%%%%%%%%%%%%%%%%%%%%%%%%%%%%%%%%%%%%%%%%%%%%%%%%%%%%%%%%%%%%%%%%%%%%%%%%
\begin{abstract}
The $\delta$-primary hyperideal is a concept unifing the $n$-ary prime and $n$-ary primary hyperideals under one frame where $\delta$ is a function which assigns to each hyperideal $Q$ of  $G$ a hyperideal $\delta(Q)$ of the same hyperring with specific properties. In this paper, 
for a commutative Krasner $(m,n)$-hyperring $G$ with scalar identity $1$, we aim to introduce and study the notion of  $(t,n)$-absorbing $\delta$-semiprimary hyperideals which is a more general  structure than $\delta$-primary hyperideals.   We say that a proper hyperideal $Q$ of $G$ is an  $(t,n)$-absorbing $\delta$-semiprimary hyperideal if
whenever   $k(a_1^{tn-t+1}) \in Q$ for $a_1^{tn-t+1} \in G$, then there exist $(t-1)n-t+2$ of the $a_i^,$s whose $k$-product is in $\delta(Q)$.  Furthermore, we extend the concept to weakly $(t,n)$-absorbing $\delta$-semiprimary hyperideals. Several properties and characterizations of these classes of hyperideals are determined. In particular, after defining srongly weakly $(t,n)$-absorbing $\delta$-semiprimary hyperideals, we present the condition in which a weakly $(t,n)$-absorbing $\delta$-semiprimary hyperideal is srongly. Moreover, we show that $k(Q^{(tn-t+1)})=0$ where the weakly $(t,n)$-absorbing $\delta$-semiprimary hyperideal $Q$ is not $(t,n)$-absorbing $\delta$-semiprimary. Also, we investigate the
stability of the concepts  under intersection,  homomorphism and cartesian product of hyperrings.

\end{abstract}
%%%%%%%%%%%%%%%%%%%%%%%%%%%%%%%%%%%%%%%%%%%%%%%%%%%%%%%%%%%%%%%%%%%%%%%%%%%%%%%%%
\maketitle
\section{Introduction}
For the first time, the idea of 2-absorbing ideals as an extension  of prime ideals was presented by Badawi in \cite{n100}. The concept and it$^,$s generalizations have been widely studied by many researchers. The notion of  2-absorbing $\delta$-semiprimary ideals in a commutative ring which is an expansion of the 2-absorbing ideals was introduced by Celikel. From \cite{Celikel},  a proper ideal $I$ of a  commutative ring $R$ refers to a (weakly) 2-absorbing $\delta$-semiprimary ideal if $x,y,z \in R$ and ($0 \neq xyz \in I$) $xyz \in I$ imply $xy \in \delta(I)$ or $yz \in \delta(I)$ or $xz \in \delta(I)$ where   $\delta$ is a function that assigns to each ideal $I$  an ideal $\delta(I)$ of the same ring.

Study on $n$-ary algebras goes back to Kasner’s lecture \cite{s5} at a  scientific meeting in 1904.   In 1928,   the first paper  was written concerning the theory of $n$-ary groups by Dorente.    The $n$-ary structures  have been studied in  \cite{s9},\cite{s6}, \cite{l1},\cite{l2}, \cite{l3}, \cite{ma}, \cite{d1} and \cite{rev1}. There is a hyperring in which the addition is a hyperoperation, while the multiplication is an ordinary binary operation. The hyperring introduced by Krasner is called Krasner hyperring. The Krasner $(m,n)$-hyperring as a generalization of the Krasner hyperrings and a subclass of $(m,n)$-hyperrings was introduced by Mirvakili and Davvaz in \cite{d1}.  $(G, h, k)$, or simply $G$, is  a Krasner $(m, n)$-hyperring if: 
(1) $(G, h$) is a canonical $m$-ary hypergroup;
(2) $(G, k)$ is a $n$-ary semigroup;
(3) The  operation $k$ is distributive with respect to the  hyperoperation $h$ , i.e., for every $g^{i-1}_1 , g^n_{ i+1}, x^m_ 1 \in G$, and $1 \leq i \leq n$,
$k(g^{i-1}_1, h(x^m _1 ), g^n _{i+1}) = h(k(g^{i-1}_1, x_1, g^n_{ i+1}),\cdots, k(g^{i-1}_1, x_m, g^n_{ i+1}));$
(4) $0$ is a zero element of the $n$-ary operation $k$, i.e., for every $g^n_ 2 \in R$ , 
$k(0, g^n _2) = k(g_2, 0, g^n _3) = \cdots = k(g^n_ 2, 0) = 0$. 
A non-empty subset $H$ of $G$ is  a subhyperring of $G$ if $(H, h, k)$ is a Krasner $(m, n)$-hyperring. 
The non-empty subset $I$ of $G$ is a hyperideal if $(I, h)$ is an $m$-ary subhypergroup of $(G, h)$ and $k(g^{i-1}_1, I, g_{i+1}^n) \subseteq I$, for all $g^n _1 \in G$ and $1 \leq i \leq n$. 
By $g^j_i$ we mean the sequence $g_i, g_{i+1},\cdots, g_j$ and it is the empty symbol if $j< i$. Then we have
$h(a_1,\cdots, a_i, b_{i+1},\cdots, b_j, c_{j+1},\cdots, c_n)=h(a^i_1, b^j_{i+1},c^n_{j+1})$ and we write  $h(a^i_1, b^{(j-i)}, c^n_{j+1}),$ where $b_{i+1} =\cdots = b_j = b$. 
We define
$h(G^n_1) = \bigcup \{h(g^n_1) \ \vert \ g_i \in G_i, 1 \leq i \leq n \}$ where $G_1^n$ are  non-empty subsets  of $G$.
Recall from  \cite{sorc1} that a proper hyperideal $P$ of  $G$
is called $n$-ary prime if $k(G_1^ n) \subseteq P$ for hyperideals $G_1^n$ of $R$ implies that $G_i \subseteq P$ for some $1 \leq i \leq n$. If  $k(g^n_ 1) \in P$ for all $g^n_ 1 \in G$ and some hyperideal $P$ of $G$ implies that $g_i \in P$ for some $1 \leq i \leq n$, then  $P$  is an $n$-ary prime hyperideal of $G$, by Lemma 4.5 in \cite{sorc1}. 
Let $I$ be a hyperideal in  $G$ with
scalar identity $1$.  By $rad(I)$ we mean the intersection is taken over all $n$-ary prime hyperideals $P$ which contain $I$. $rad(I)=G$ if the set of all $n$-ary prime hyperideals containing $I$ is empty . It was shown (Theorem 4.23 in \cite{sorc1}) that if $g \in rad(I)$, then 
there exists $s \in \mathbb {N}$ such that $k(g^ {(s)} , 1^{(n-s)} ) \in I$ for $s \leq n$, or $k_{(l)} (g^ {(s)} ) \in I$ for $s = l(n-1) + 1$.
A proper hyperideal $I$ of $G$ with the scalar identity $1$ is  an $n$-ary primary hyperideal if $k(g^n _1) \in I$ and $g_i \notin I$ imply  $k(g_1^{i-1}, 1,g_{ i+1}^n) \in rad(I)$ for some $1 \leq i \leq n$. Theorem 4.28 in \cite{sorc1} shows that the radical of an $n$-ary primary hyperideal of $G$ is an $n$-ary prime hyperideal of $G$. Let $\mathcal{HI}(G)$ denote the set of all hyperideals of  $G$. Recall from \cite{mah3} that a function $\delta$  from $\mathcal{HI}(G)$ to $\mathcal{HI}(G)$ is  a hyperideal expansion of $G$  if   $I \subseteq \delta(I)$ and  if $I \subseteq J$ for any hyperideals $I, J$ of $G$, then $\delta (I) \subseteq \delta (J)$. For example, the mapping   $\delta_0$, $\delta_1$ and $\delta_R$, defined by $\delta_0(I)=I$,  $\delta_1(I)=rad(I)$ and $\delta_R(I)=R$ for all hyperideals $I$ of $G$, are hyperideal expansions of $G$. Furthermore, the function $\delta_q$,  defined by $\delta_q(I/J)=\delta(I)/J$ for any  hyperideal $I$ of $G$ containing hyperideal $J$ and an expansion function $\delta$ of $G$, is a hyperideal expansion  of $G/J$. A proper hyperideal $P$ of $G$ is an $(t,n)$-absorbing hyperideal as in \cite{rev2} if  $k(g_1^{tn-t+1}) \in P$ for some $g_1^{tn-t+1} \in G$ implies that there exist $(t-1)n-t+2$ of the $g_i^,$s whose $k$-product is in $P$. From the paper, a proper $P$ of $G$ is called an $(t,n)$-absorbing primary hyperideal if  $k(g_1^{tn-t+1}) \in P$ for some $g_1^{tn-t+1} \in G$ implies that $k(g_1^{(t-1)n-t+2}) \in P$ or a $k$-prodect of $(t-1)n-t+2$ of the $g_i^,$s except $k(g_1^{tn-t+2})$ is in $rad(P)$. 
 The notion of $n$-ary $\delta$-primary hyperideals was introduced in a Krasner $(m,n)$-hyperring in \cite{mah3}. This concept unifies the $n$-ary prime and $n$-ary primary hyperideals under one frame.  A proper hyperideal $Q$ of $G$ is called $\delta$-primary hyperideal if $k(a_1^n) \in Q$ for $a_1^n \in G$ implies that $a_i \in Q$ or $k(a_1^{i-1},1,a_{i+1}^n) \in \delta(Q)$ for some $1 \leq i \leq n$. Also, a proper hyperideal $Q$ of $G$ is  $(t,n)$-absorbing $\delta$-primary if for $a_1^{tn-t+1} \in G$, $k(a_1^{tn-t+1}) \in Q$ implies that $k(a_1^{(t-1)n-t+2}) \in Q$ or a $k$-product of $(t-1)n-t+2$ of $a_i^,$ s except $k(a_1^{(t-1)n-t+2})$ is in $\delta(Q)$.

 Now in this paper, we aim to define and study the notion of $(t,n)$-absorbing $\delta$-semiprimary hyperideals which is  more general  than the concept of   $\delta$-primary hyperideals in a Krasner $(m,n)$-hyperring. Additionally,  we present an extension of the notion  called weakly $(t,n)$-absorbing $\delta$-semiprimary hyperideals.
Among many results in this paper, it is shown (Theorem \ref{9}) that if  $Q$ is a weakly $(t,n)$-absorbing $\delta$-semiprimary hyperideal of $G$ that is not $(t,n)$-absorbing $\delta$-semiprimary, then $k(Q^{(tn-t+1)})=0$. Let  $Q$ be  a weakly $(t,n)$-absorbing $\delta$-semiprimary hyperideal of $G$ and $0 \neq k(Q_1^{tn-t+1}) \subseteq Q$ for   some hyperideals $Q_1^{tn-t+1}$ of $G$. It is shown (Theorem \ref{azad}) that  if  $Q$ is a free $\delta$-$(t,n)$-zero with respect to $k(Q_1^{tn-t+1})$, then   $k$-product of $(t-1)n-t+2$ of the $Q_i$  is a subset of $\delta(Q)$. Moreover, the stability of these concepts are examined under intersection,  homomorphism and cartesian product of hyperrings.
%%%%%%%%%%%%%%%%%%%%%%
%%%%%%%%%%%%%%%%%%%%%%%%%%%%%%%%%%%%%%%%%
\section{(weakly) $(t,n)$-absorbing $\delta$-semiprimay hyperideals }
 Throughout this section, $G$ is a   commutative Krasner $(m,n)$-hyperring with scalar identity $1$. Initially, we
give the deﬁnition of $(t,n)$-absorbing $\delta$-semiprimay hyperideals of $G$.

\begin{definition}
Let $\delta$ be a hyperideal expansion of  $G$ and $t$ be a positive integer. A proper hyperideal $Q$ of $G$ is called an $(t,n)$-absorbing $\delta$-semiprimary hyperideal if
whenever  $k(a_1^{tn-t+1}) \in Q$ for $a_1^{tn-t+1} \in G$, then there exist $(t-1)n-t+2$ of the $a_i^,$s whose $k$-product is in $\delta(Q)$. 
\end{definition}

\begin{example} \label{inv}
For all $n$-ary prime hyperideal of $G$, we have

$n$-ary prime \hspace{0.3 cm} $\Longrightarrow $ \hspace{0.3 cm} $n$-ary $\delta$-primary \hspace{0.3 cm} $\Longrightarrow$ \hspace{0.3 cm} $(t,n)$-absorbing $\delta$-primary \hspace{0.3 cm}

 $\Longrightarrow$ 
\hspace{0.3 cm} $(t,n)$-absorbing $\delta$-semiprimary
\end{example}
The next example shows that the inverse of $"\Longrightarrow"^,$s in
 Example \ref{inv}, is not true, in general.
 \begin{example} 
In the Krasner $(2,2)$-hyperring $(G=[0,1],\boxplus, \circ)$  that $"\circ"$ is the usual multiplication on real numbers and 2-ary hyperoperation $"\boxplus"$ is defined by
\[
a \boxplus b=
\begin{cases}
\{max\{a, b\}\}, & \text{if $a \neq b$}\\
$[0,a]$, & \text{if $a =b,$}
\end{cases}\]

$\vspace{0.2cm}$

the hyperideal $Q=[0,0.5]$ is a $(2,2)$-absorbing $\delta_1$-semiprimary hyperideal of $G$ but it is not $2$-ary prime.
\end{example}
\begin{theorem} \label{1}
Let $Q$ be an $(t,n)$-absorbing $\delta$-semiprimary hyperideal of $G$ with $rad(\delta(Q)) \subseteq \delta(rad(Q))$. Then $rad(Q)$ is an $(t,n)$-absorbing  $\delta$-semiprimary hyperideal of $G$. 
\end{theorem}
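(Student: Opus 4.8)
The plan is to verify the defining condition of an $(t,n)$-absorbing $\delta$-semiprimary hyperideal directly for $rad(Q)$, reducing everything to the already-known behaviour of $Q$ through the radical characterization (Theorem 4.23 in \cite{sorc1}). First I would check that $rad(Q)$ is proper: if $1 \in rad(Q)$, that characterization forces a $k$-power of $1$, hence $1$ itself, into $Q$, contradicting that $Q$ is proper. Now suppose $k(a_1^{tn-t+1}) \in rad(Q)$ for some $a_1^{tn-t+1} \in G$, and write $g = k(a_1^{tn-t+1})$. By the radical characterization there is an exponent $s$ for which an $s$-fold $k$-power $g^{[s]}$ of $g$ (padded by scalar identities when $s \le n$, as in the two cases of Theorem 4.23) lies in $Q$.

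The key manipulation is to absorb this power into the individual entries. Using commutativity and associativity of $k$ together with $k(x,1^{(n-1)})=x$, I would rewrite $g^{[s]}=k(a_1^{tn-t+1})^{[s]}$ as $k(b_1^{tn-t+1})$, where $b_i$ denotes the $s$-fold $k$-power of $a_i$; thus $k(b_1^{tn-t+1}) \in Q$. Since $Q$ is $(t,n)$-absorbing $\delta$-semiprimary, there exist $(t-1)n-t+2$ of the $b_i$'s, say with indices $i_1,\ldots,i_r$ where $r=(t-1)n-t+2$, whose $k$-product lies in $\delta(Q)$. Reversing the same rearrangement, $k(b_{i_1},\ldots,b_{i_r})$ is exactly the $s$-fold $k$-power of $k(a_{i_1},\ldots,a_{i_r})$, so that power lies in $\delta(Q)$; equivalently $k(a_{i_1},\ldots,a_{i_r}) \in rad(\delta(Q))$.

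Finally I would invoke the hypothesis $rad(\delta(Q)) \subseteq \delta(rad(Q))$ to conclude $k(a_{i_1},\ldots,a_{i_r}) \in \delta(rad(Q))$. This exhibits a $k$-product of $(t-1)n-t+2$ of the $a_i$'s lying in $\delta(rad(Q))$, which is precisely the condition required for $rad(Q)$ to be an $(t,n)$-absorbing $\delta$-semiprimary hyperideal. I expect the only delicate point to be the bookkeeping in the middle step: verifying that raising to the $s$-th $k$-power genuinely commutes with selecting a sub-product of the chosen entries, so that a power of the full product lying in $Q$ (respectively a power of the sub-product lying in $\delta(Q)$) can be cleanly re-expressed through the $b_i$; the $n$-ary identity padding appearing in the two cases of Theorem 4.23 must be tracked carefully here. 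Once that translation is in place, the hypothesis relating $rad(\delta(Q))$ and $\delta(rad(Q))$ does all the remaining work.
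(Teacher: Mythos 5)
Your proposal is correct and follows essentially the same route as the paper: both distribute the radical exponent over the individual factors (writing the $s$-fold power of $k(a_1^{tn-t+1})$ as a $k$-product of the powered entries, with identity padding), apply the $(t,n)$-absorbing $\delta$-semiprimary property of $Q$ to those powered entries, and pull the resulting sub-product back through $rad(\delta(Q)) \subseteq \delta(rad(Q))$. The only cosmetic difference is that the paper argues contrapositively, assuming all sub-products other than $k(a_1^{(t-1)n-t+2})$ avoid $\delta(rad(Q))$, whereas you conclude directly for whichever sub-product of the powered entries lands in $\delta(Q)$.
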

\begin{proof}
Let $k(a_1^{tn-t+1}) \in rad(Q)$ for $a_1^{tn-t+1} \in G$ such that all products of $(t-1)n-t+2$ of the $a_i^,$s, other than $k(a_1^{(t-1)n-t+2})$, are not in $\delta(rad(Q))$. By the assumption, we conclude that  none of the $k$-products of the $a_i^,$s are in $rad(\delta(Q))$. 
From $k(a_1^{tn-t+1}) \in rad(Q)$, it follows that there exists $s \in \mathbb{N}$ with $k(k(a_1^{tn-t+1})^{(s)},1^{(n-s)}) \in Q$, for $s \leq n$ or $k_{(l)}(k(a_1^{tn-t+1})^{(s)}) \in Q$, for $s>n$ and $s=l(n-1)+1$. In the first possibility, we get $k(k(a_1)^{(s)},k(a_2)^{(s)},\cdots,k(a_{tn-t+1})^{(s)},1^{(n-s)}) \in Q$. Since $Q$ is an $(t,n)$-absorbing $\delta$-semiprimary hyperideal of $G$, we conclude that 

$\hspace{2cm}k(k(a_1)^{(s)},k(a_2)^{(s)},\cdots,k(a_{(t-1)n-t+2})^{(s)},1^{(n-s)})$   

$\hspace{2.5cm}=k(k(a_1^{(t-1)n-t+2})^{(s)},1^{(n-s)}) \in \delta(Q)$\\ because none of the $k$-products of the $a_i^,$s are in $rad(\delta(Q))$. Since $k(a_1^{(t-1)n-t+2}) \in rad(\delta(Q))$ and $rad(\delta(Q)) \subseteq \delta(rad(Q))$, then we have $k(a_1^{(t-1)n-t+2}) \in \delta(rad(Q))$. If $k_{(l)}(k(a_1^{tn-t+1})^{(s)}) \in Q$, for $s>n$ and $s=l(n-1)+1$, then we are done similarly. Thus  $rad(Q)$ is an $(t,n)$-absorbing  $\delta$-semiprimary hyperideal of $G$.
\end{proof}
The following result is a direct consequence of the previous theorem.
\begin{corollary}\label{2}
If  $Q$ is an $(t,n)$-absorbing $\delta_1$-semiprimary hyperideal of $G$, then $rad(Q)$ is an $(t,n)$-absorbing hyperideal of $G$. 
\end{corollary}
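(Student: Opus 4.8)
The plan is to obtain this as a direct specialization of Theorem \ref{1} to the expansion $\delta = \delta_1$, where $\delta_1(I) = rad(I)$. First I would verify that the hypothesis of Theorem \ref{1}, namely $rad(\delta(Q)) \subseteq \delta(rad(Q))$, is automatically satisfied for $\delta = \delta_1$. Indeed, $rad(\delta_1(Q)) = rad(rad(Q))$ and $\delta_1(rad(Q)) = rad(rad(Q))$, so the required containment holds trivially (in fact with equality). Hence Theorem \ref{1} applies and tells us that $rad(Q)$ is an $(t,n)$-absorbing $\delta_1$-semiprimary hyperideal of $G$.

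Next I would translate the conclusion ``$rad(Q)$ is $(t,n)$-absorbing $\delta_1$-semiprimary'' into the plain $(t,n)$-absorbing property. By definition, the former says that whenever $k(a_1^{tn-t+1}) \in rad(Q)$ for $a_1^{tn-t+1} \in G$, some $k$-product of $(t-1)n-t+2$ of the $a_i^,$s lies in $\delta_1(rad(Q)) = rad(rad(Q))$. The key observation is the idempotency of the radical, $rad(rad(Q)) = rad(Q)$, which follows from the description of $rad(\cdot)$ as the intersection of all $n$-ary prime hyperideals containing the given hyperideal: the $n$-ary prime hyperideals containing $Q$ are exactly those containing $rad(Q)$, so the two radicals coincide. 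Substituting $rad(rad(Q)) = rad(Q)$, the $(t,n)$-absorbing $\delta_1$-semiprimary condition on $rad(Q)$ becomes precisely the requirement that some $k$-product of $(t-1)n-t+2$ of the $a_i^,$s lies in $rad(Q)$, which is the definition of an $(t,n)$-absorbing hyperideal.

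I do not anticipate any real obstacle, since the statement is essentially Theorem \ref{1} with $\delta_1$ plugged in; the one point deserving a word of justification is the idempotency $rad(rad(Q)) = rad(Q)$, which collapses the $\delta_1$-semiprimary conclusion to the absorbing one. If one prefers not to invoke idempotency as a black box, I would instead argue directly from the radical characterization recalled before the definitions (Theorem 4.23 of \cite{sorc1}): given $k(a_1^{(t-1)n-t+2}) \in rad(rad(Q))$, there exists $s \in \mathbb{N}$ with $k(k(a_1^{(t-1)n-t+2})^{(s)},1^{(n-s)}) \in rad(Q)$ for $s \leq n$, or $k_{(l)}(k(a_1^{(t-1)n-t+2})^{(s)}) \in rad(Q)$ for $s = l(n-1)+1$, and either case places a suitable power, hence the element itself, back inside $rad(Q)$, giving the desired $(t,n)$-absorbing conclusion.
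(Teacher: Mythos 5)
Your proposal is correct and follows exactly the route the paper intends: the paper offers no written proof, simply calling the corollary a direct consequence of Theorem \ref{1}, and your specialization to $\delta=\delta_1$ together with the verification that $rad(\delta_1(Q))=rad(rad(Q))=\delta_1(rad(Q))$ and the idempotency $rad(rad(Q))=rad(Q)$ supplies precisely the details that make that deduction work.
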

From \cite{sorc1},  the hyperideal generated by an element $g$ in $G$ is defined by 
$<g>=k(G,g,1^{(n-2)})=\{k(r,g,1^{(n-2)}) \ \vert \ r \in G\}.$
The following theorem will give us a characterization of $(t,n)$-absorbing $\delta$-semiprimary hyperideals.
\begin{theorem}
Every proper hyperideal is an $(t,n)$-absorbing $\delta$-semiprimary hyperideal of $G$ if and only if every proper principal hyperideal is an $(t,n)$-absorbing $\delta$-semiprimary hyperideal of $G$.
\end{theorem}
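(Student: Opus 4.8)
The plan is to dispatch the forward implication immediately and concentrate all the work on the converse. If every proper hyperideal of $G$ is $(t,n)$-absorbing $\delta$-semiprimary, then in particular every proper \emph{principal} hyperideal is $(t,n)$-absorbing $\delta$-semiprimary, since principal hyperideals form a subclass of all hyperideals; so the $(\Longrightarrow)$ direction requires no argument.

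For the substantive direction $(\Longleftarrow)$, assume every proper principal hyperideal of $G$ is $(t,n)$-absorbing $\delta$-semiprimary, and let $Q$ be an arbitrary proper hyperideal of $G$. To verify the defining condition, suppose $k(a_1^{tn-t+1}) \in Q$ for some $a_1^{tn-t+1} \in G$. The key device is to pass from $Q$ to the principal hyperideal generated by the single element $b := k(a_1^{tn-t+1})$. Since $b \in Q$ and $Q$ is a hyperideal, we have $\langle b \rangle \subseteq Q \subsetneq G$, so $\langle b \rangle$ is a \emph{proper} principal hyperideal; by hypothesis it is therefore $(t,n)$-absorbing $\delta$-semiprimary.

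Now I would apply the definition of $(t,n)$-absorbing $\delta$-semiprimary to $\langle b\rangle$. Using that $1$ is a scalar identity together with the description $\langle b\rangle = k(G, b, 1^{(n-2)})$ from \cite{sorc1}, one checks $b = k(1, b, 1^{(n-2)}) \in \langle b\rangle$, that is, $k(a_1^{tn-t+1}) \in \langle b\rangle$. Hence there exist $(t-1)n-t+2$ of the $a_i$'s whose $k$-product lies in $\delta(\langle b\rangle)$. The final step invokes monotonicity of the expansion $\delta$: from $\langle b\rangle \subseteq Q$ we get $\delta(\langle b\rangle) \subseteq \delta(Q)$, so that same $k$-product of $(t-1)n-t+2$ of the $a_i$'s lies in $\delta(Q)$. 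This is exactly the condition making $Q$ an $(t,n)$-absorbing $\delta$-semiprimary hyperideal, completing the converse.

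The argument is short and presents no genuine difficulty; the only points demanding care are verifying that $\langle b\rangle$ is proper (guaranteed by $\langle b\rangle \subseteq Q$ and properness of $Q$) and that $b$ genuinely belongs to $\langle b\rangle$ (which relies on the scalar identity). The crux, such as it is, is recognizing that testing the absorbing condition on the principal hyperideal $\langle b\rangle$ suffices, because the monotonicity of $\delta$ transports the conclusion back up to $\delta(Q)$.
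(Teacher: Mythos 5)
Your proof is correct and follows essentially the same route as the paper: pass to the principal hyperideal $\langle k(a_1^{tn-t+1})\rangle\subseteq Q$, apply the hypothesis there, and use monotonicity of $\delta$ to conclude $\delta(\langle k(a_1^{tn-t+1})\rangle)\subseteq\delta(Q)$. Your added checks that $\langle b\rangle$ is proper and that $b\in\langle b\rangle$ are details the paper leaves implicit.
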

\begin{proof}
$\Longrightarrow$ It is obvious.

$\Longleftarrow $ Assume that $Q$ is a proper hyperideal of $G$ and $k(a_1^{tn-t+1}) \in Q$ for $a_1^{tn-t+1} \in G$. Therefore $k(a_1^{tn-t+1}) \in <k(a_1^{tn-t+1})>$. Since every proper principal hyperideal is an $(t,n)$-absorbing $\delta$-semiprimary hyperideal of $G$, there exist $(t-1)n-t+2$ of the $a_i^,$s whose $k$-product is in $\delta(<k(a_1^{tn-t+1})>) \subseteq \delta(Q)$. Hence $Q$ is an $(t,n)$-absorbing $\delta$-semiprimary hyperideal of $G$.
\end{proof}
%A proper hyperideal $Q$ of $G$ is said to be $(t,n)$-absorbing semiprime if whenever $k(a^{(tn-n+1)}) \in Q$ for $a \in G$, then there exist $(t-1)n-t+2$ of the $a^,$s whose $k$-product is in $Q$.
%\begin{theorem}
%Let $Q$ be a proper hyperideal of $G$ and $\delta(Q)$ be a $(1,n)$-semiprime hyperideal of $G$ such that $\delta(Q) \subseteq rad(Q)$. If $Q$ is an $(t,n)$-absorbing $\delta$-semiprimary hyperideal of $G$, then $\delta(Q)=rad(Q)$.
%\end{theorem}
%\begin{proof}
%Let $x \in rad(Q)$. Then there exists $s \in \mathbb{N}$ such that $k(x^{(s)},1^{(n-s)}) \in Q$ for $s \leq n$, or $k_{(l)}(x^{(s)}) \in Q$ for $s=l(n-1)+1$. In the first case, 
%\end{proof}
Recall from \cite{mah3} that a hyperideal expansion  $\delta$  of  $G$ is called  intersection preserving if it satisfies $\delta(P\cap Q)=\delta(P) \cap \delta(Q)$, for all hyperideals $P$ and $Q$ of $G$. For example, hyperideal expansion  $\delta_1$  of $G$ is intersection preserving.
\begin{theorem} 
Let the hyperideal expansion $\delta$ of $G$ be intersection preserving. If  $Q_1^s$ are  $(t,n)$-absorbing $\delta$-semiprimary hyperideals of $G$ Such that  $\delta(Q_i) = P$ for each $1 \leq i \leq s$, then $Q=\bigcap_{i=1}^s Q_i$ is  an $(t,n)$-absorbing $\delta$-semiprimary hyperideal of $G$ with $\delta(Q)=P$.
\end{theorem}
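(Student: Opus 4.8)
The plan is to separate the statement into two claims: that $\delta(Q)=P$, and that $Q$ fulfils the defining condition of an $(t,n)$-absorbing $\delta$-semiprimary hyperideal. First I would record that $Q=\bigcap_{i=1}^s Q_i$ is proper, which is immediate since $Q\subseteq Q_1 \subsetneq G$, so $Q\neq G$.

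To obtain $\delta(Q)=P$, I would promote the intersection-preserving identity $\delta(P_1\cap P_2)=\delta(P_1)\cap\delta(P_2)$ from two hyperideals to the $s$ hyperideals $Q_1,\dots,Q_s$ by a routine induction on $s$. This yields
\[
\delta(Q)=\delta\Big(\bigcap_{i=1}^s Q_i\Big)=\bigcap_{i=1}^s \delta(Q_i)=\bigcap_{i=1}^s P = P,
\]
where the last two equalities use the hypothesis $\delta(Q_i)=P$ for every $i$.

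For the absorbing condition, suppose $k(a_1^{tn-t+1})\in Q$ for some $a_1^{tn-t+1}\in G$. Then in particular $k(a_1^{tn-t+1})\in Q_1$, and since $Q_1$ is $(t,n)$-absorbing $\delta$-semiprimary there exist $(t-1)n-t+2$ of the $a_i$'s whose $k$-product lies in $\delta(Q_1)$. But $\delta(Q_1)=P=\delta(Q)$ by the previous step, so this product lies in $\delta(Q)$, which is exactly the conclusion required. Hence $Q$ is an $(t,n)$-absorbing $\delta$-semiprimary hyperideal with $\delta(Q)=P$.

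The computation is short, and I do not expect a genuine obstacle. The one point that carries the whole argument is the common-value hypothesis $\delta(Q_i)=P$: it is what forces a $k$-product landing in any single $\delta(Q_i)$ to land simultaneously in the intersection $\bigcap_j \delta(Q_j)$, which the intersection-preserving assumption identifies with $\delta(Q)$. Without it, the absorbing property of $Q_i$ would only place the product in $\delta(Q_i)$, not in $\delta(Q)$, and the argument would break down (the winning index set may differ from one $Q_i$ to the next). It is also worth noting that only one of the $Q_i$, say $Q_1$, is actually needed for the absorbing step; the full hypothesis that every $Q_i$ is absorbing enters only through the uniform value of $\delta$ used to compute $\delta(Q)$.
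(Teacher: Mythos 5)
Your proof is correct and follows essentially the same route as the paper: use intersection preservation together with the common value $\delta(Q_i)=P$ to get $\delta(Q)=P$, then apply the absorbing property of a single $Q_i$ and transfer the resulting $k$-product from $\delta(Q_i)$ to $\delta(Q)$ via the equality $\delta(Q_i)=P=\delta(Q)$. Your version is in fact slightly cleaner, since the paper's preliminary reduction to the case $k(a_1^{(t-1)n-t+2})\notin\delta(Q_u)$ is redundant once all the $\delta(Q_i)$ coincide.
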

\begin{proof}
Assume that   $k(a_1^{tn-t+1})\in Q$ for $a_1^{tn-t+1} \in G$ such that $k(a_1^{(t-1)n-t+2}) \notin \delta(Q)$. Since $\delta(Q)=\delta(\cap_{i=1}^s Q_i)=\cap_{i=1}^s \delta(Q_i)=P$, then there exists $1 \leq u \leq s$ such that $k(a_1^{(t-1)n-t+2}) \notin \delta(Q_u)$. Since $Q_u$ is  an $(t,n)$-absorbing $\delta$-semiprimary hyperideal of $G$ and $k(a_1^{tn-t+1})\in Q_u$, then there is a $k$-product of $(t-1)n-t+2$ of the $a_i^,$s is in $\delta(Q_u)=P=\delta(Q)$. Thus $Q=\bigcap_{i=1}^s Q_i$ is  an $(t,n)$-absorbing $\delta$-semiprimary hyperideal of $G$ with $\delta(Q)=P$.
\end{proof}
Let $(G_1, h_1, k_1)$ and $(G_2, h_2, k_2)$ be two Krasner $(m,n)$-hyperrings such that $1_{G_1}$ and $1_{G_2}$ be scalar identities of $G_1$ and $G_2$,  respectively. Then $(G_1 \times G_2,h=h_1 \times h_2, k=k_1 \times k_2) $ is a Krasner $(m, n)$-hyperring where

$h_1 \times h_2 ((a_1,b_1), \cdots,(a_m,b_m))=\{(a,b) \ \vert \  a \in h_1(a_1^m), b \in h_2(b_1^m)\},$

$k_1 \times k_2((a_1,b_1),\cdots,(a_n,b_n))=(k_1(a_1^n),k_2(b_1^n)),$\\
for all $a_i \in G_1$ and $b_i \in G_2$ \cite{cartis}. %The function $\delta_{\times}$, defined by $\delta_{\times}(Q_1 \times Q_2)=\delta_1(Q_1) \times \delta_2(Q_2)$ for any hyperideal $Q_i$ of $G_i$ and hyperideal expansion $\delta_i$ of $G_i$ where $i \in \{1,2\}$, is a hyperideal expansion of $G_1 \times G_2$.
 \begin{theorem} \label{cart}
 Let $\delta_1$ and $\delta_2$ be two hyperideal expansions of Krasner $(m,n)$-hyperrings $G_1$ and $G_2$, respectively,   such that $\delta(Q_1 \times  Q_2)=\delta_1(Q_1) \times  \delta_2(Q_2)$ for  hyperideals $Q_1$ and $Q_2$ of $G_1$ and $G_2$, respectively. If $Q=Q_1 \times Q_2$ is an $(t+1,n)$-absorbing $\delta$-semiprimary hyperideal of $G=G_1  \times G_2$,
 then either  $Q_1$ is an $(t+1,n)$-absorbing $\delta_1$-semiprimary hyperideal of $G_1$ and $\delta_2(Q_2)=G_2$ or $Q_2$ is an $(t+1,n)$-absorbing $\delta_2$-semiprimary hyperideal of $G_2$ and $\delta_1(Q_1)=G_1$ or $Q_i$ is an $(t,n)$-absorbing $\delta_i$-semiprimary hyperideal of $G_i$ for each $i \in \{1,2\}$.
\end{theorem}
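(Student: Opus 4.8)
The plan is to split the argument into three cases according to whether the factor expansions are proper, exploiting the product formula $\delta(Q_1\times Q_2)=\delta_1(Q_1)\times\delta_2(Q_2)$ together with the assumption that $Q=Q_1\times Q_2$ is $(t+1,n)$-absorbing $\delta$-semiprimary. I write $0,1$ for the scalar zero and identity of each factor, and I use that an expansion satisfies $\delta_i(G_i)=G_i$, so $G_i$ is the only hyperideal whose expansion is everything. The three alternatives will be produced by the implications: $\delta_2(Q_2)=G_2\Rightarrow$ first alternative; $\delta_1(Q_1)=G_1\Rightarrow$ second alternative; $\delta_1(Q_1)\neq G_1$ and $\delta_2(Q_2)\neq G_2\Rightarrow$ third alternative. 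Since these exhaust all possibilities, the theorem follows.

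First I would treat $\delta_2(Q_2)=G_2$. Given $a_1^{(t+1)n-t}\in G_1$ with $k_1(a_1^{(t+1)n-t})\in Q_1$, I would test the $(t+1)n-t=(t+1)(n-1)+1$ elements $(a_1,0),(a_2,1),\dots,(a_{(t+1)n-t},1)$ of $G$. Their $k$-product is $(k_1(a_1^{(t+1)n-t}),0)\in Q_1\times Q_2=Q$, so the hypothesis yields $tn-t+1$ of them whose $k$-product lies in $\delta(Q)=\delta_1(Q_1)\times G_2$. Reading off first coordinates, and noting that \emph{every} first coordinate is one of the $a_i$, gives $tn-t+1$ of the $a_i$ with $k_1$-product in $\delta_1(Q_1)$, which is exactly the $(t+1,n)$-absorbing $\delta_1$-semiprimary condition; together with the standing $\delta_2(Q_2)=G_2$ this is the first alternative. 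The symmetric construction with the roles of the factors exchanged gives the second alternative when $\delta_1(Q_1)=G_1$. No counting issue arises in these two cases, because the input length $(t+1)(n-1)+1$ and the output length $t(n-1)+1$ are the same for $Q$ as for the relevant factor.

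It then remains to assume $\delta_1(Q_1)\neq G_1$ and $\delta_2(Q_2)\neq G_2$ and to show each $Q_i$ is $(t,n)$-absorbing $\delta_i$-semiprimary. For $Q_1$ I would start from $a_1^{tn-t+1}\in G_1$ with $k_1(a_1^{tn-t+1})\in Q_1$ and form the $(t+1)n-t$ elements $(a_1,1),\dots,(a_{tn-t+1},1),(1,0),\dots,(1,0)$, adjoining $n-1$ auxiliary factors $(1,0)$. Their $k$-product is $(k_1(a_1^{tn-t+1}),0)\in Q$, so the $(t+1,n)$-absorbing hypothesis produces a subset of size $tn-t+1=t(n-1)+1$ whose $k$-product lies in $\delta(Q)=\delta_1(Q_1)\times\delta_2(Q_2)$. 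Here $\delta_2(Q_2)\neq G_2$ does the first piece of work: a subset consisting only of the $(a_i,1)$ has second-coordinate product $k_2(1^{(tn-t+1)})=1\notin\delta_2(Q_2)$, so at least one auxiliary factor $(1,0)$ must lie in the absorbing subset. The argument for $Q_2$ is symmetric.

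The main obstacle is an arity match. The $(t,n)$-absorbing conclusion requires a $k_1$-product of \emph{exactly} $(t-1)n-t+2=(t-1)(n-1)+1$ of the $a_i$, whereas the absorbing subset I am handed has size $t(n-1)+1$, larger by exactly $n-1$, the number of auxiliary factors. If $r$ of the auxiliaries lie in the subset then, after the $r$ scalar identities cancel out of the iterated $n$-ary $k_1$-product, the surviving first coordinate is a $k_1$-product of $t(n-1)+1-r$ of the $a_i$ inside $\delta_1(Q_1)$; this has the required length $(t-1)(n-1)+1$ precisely when $r=n-1$, that is, when \emph{all} $n-1$ auxiliaries are forced into the subset. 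Thus the delicate point is to upgrade the ``$r\geq 1$'' supplied by $\delta_2(Q_2)\neq G_2$ to ``$r=n-1$'', since an undersized count of auxiliaries would leave a $k_1$-product that is too long and of the wrong arity; pinning this forcing down (and then checking the routine cancellation of identities in the iterated product, and the symmetric treatment of $Q_2$) is where I expect the care to be needed.
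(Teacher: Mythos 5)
Your first two cases are correct and complete: testing the list $(a_1,0),(a_2,1),\dots,(a_{(t+1)n-t},1)$ (or its mirror image) and reading off first coordinates is essentially the paper's own mechanism, and since every tested element carries a distinct $a_i$ in the relevant coordinate, the subset handed back by the $(t+1,n)$-absorbing hypothesis has exactly the right size $tn-t+1$ and translates directly into the $(t+1,n)$-absorbing $\delta_1$-semiprimary condition for $Q_1$. The problem is the third case, and you have diagnosed it precisely but not solved it: your argument shows only that the absorbing subset of size $t(n-1)+1$ must contain \emph{at least one} of the $n-1$ auxiliaries $(1_{G_1},0)$ (because $1_{G_2}\notin\delta_2(Q_2)$), whereas the desired conclusion needs it to contain \emph{all} of them, and nothing in the hypotheses forces this. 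If the subset contains $r$ auxiliaries with $1\le r\le n-2$, its first coordinate is $k_1(a_{i_1},\dots,a_{i_j},1^{(r)})$ with $j=t(n-1)+1-r$ strictly larger than $(t-1)(n-1)+1$; membership of this longer product in the hyperideal $\delta_1(Q_1)$ does not descend to any subproduct of the required length $(t-1)n-t+2$, and it is not clear that a different choice of auxiliaries repairs this. So, as written, your proof of the third alternative is complete only when $n=2$, where there is a single auxiliary and the dichotomy ``$r=0$ or $r=n-1$'' is exhaustive.

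For what it is worth, you have not missed a trick that the paper supplies: the paper's proof of the third assertion adjoins only \emph{one} element $(1_{G_1},0)$ (with garbled indexing, e.g.\ $x_{(t-1)n-t+2}$ overwriting an earlier $x_i$), so its test list has length $tn-t+2$, which equals the required $(t+1)(n-1)+1$ only when $n=2$, and it then considers only the two extreme subsets --- omit the auxiliary, or keep it and drop one $(a_i,1_{G_2})$. The intermediate-subset problem you isolated is therefore a genuine defect of the published argument for $n>2$, not merely of your reconstruction; your write-up is the more careful of the two, but it leaves the same gap open.
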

\begin{proof}
Let $Q=Q_1 \times Q_2$ be an $(t+1,n)$-absorbing $\delta$-semiprimary hyperideal of $G=G_1  \times G_2$. Assume that $\delta_1(Q_1) \neq G_1$ and $\delta_2(Q_2)=G_2$. Let us suppose that $k_1(a_1^{(t+1)n-t}) \in Q_1$ for some $a_1^{(t+1)n-t} \in G_1$ such that all products of $tn-t+1$ of the $a_i^,$s except $k_1(a_1^{tn-t+1})$ are not in $\delta(Q_1)$. Note that $k((a_1,0),\cdots,(a_{(t+1)n-t},0)) \in Q$ and all products of $tn-t+1$ of the $(a_i,0)^,$s are not in $\delta(Q)$. Since $Q$ is an $(t+1,n)$-absorbing $\delta$-semiprimary hyperideal of $G$, we get $k((a_1,0),\cdots,(a_{tn-t+1},0)) \in \delta(Q)=\delta_1(Q_1) \times \delta_2(Q_2)$ which means $k_1(a_1^{tn-t+1}) \in \delta(Q_1)$. Thus $Q_1$ is an $(t+1,n)$-absorbing $\delta_1$-semiprimary hyperideal of $G_1$. Similiar for the second assertion. For the third assertion, assume $\delta_1(Q_1) \neq G_1$ and $\delta_2(Q_2) \neq G_2$. Moreover, let us suppose that $Q_1$ is not an $(t,n)$-absorbing $\delta$-semiprimary hyperideal of $G_1$ and $k_1(a_1^{tn-t+1}) \in Q_1$. We define the following elements of $G$: $x_1=(a_1,1_{G_2}), x_2=(a_2,1_{G_2}), \cdots, x_{tn-t+1}=(a_{tn-t+1},1_{G_2}),x_{(t-1)n-t+2}=(1_{G_1},0)$. Therefore we have $k(x_1^{(t-1)n-t+2})=(k_1(a_1^{tn-t+1}),0) \in Q$, $k(x_1^{tn-t+1})=(k_1(a_1^{tn-t+1}),1_{G_2}) \notin \delta(Q)$ and $k(x_1,\cdots,\hat{x}_i,\cdots,x_{(t-1)n-t+2})=(k_1(a_1,\cdots,\hat{a}_i,\cdots,a_{(t-1)n-t+2}),0) \notin \delta(Q)$ for some $1 \leq i \leq tn-t+1$, a contradiction. Thus $Q_1$ is an $(t,n)$-absorbing $\delta_1$-semiprimary hyperideal of $G_1$. Similarly, we conclude that $Q_2$ is an $(t,n)$-absorbing $\delta_2$-semiprimary hyperideal of $G_2$
\end{proof}
 \begin{theorem} \label{cart3}
 Let $\delta_1,\cdots, \delta_{tn-t+1}$ be hyperideal expansions of Krasner $(m,n)$-hyperrings $G_1,\cdots,G_{tn-t+1}$  such that $\delta(Q_1 \times \cdots \times Q_{tn-t+1})=\delta_1(Q_1) \times \cdots \times \delta_{tn-t+1}(Q_{tn-t+1})$ for  hyperideals $Q_1,\cdots,Q_{tn-t+1}$ of $G_1,\cdots,G_{tn-t+1}$, respectively. If $Q=Q_1 \times \cdots \times Q_{tn-t+1}$ is an $(t+1,n)$-absorbing $\delta$-semiprimary hyperideal of $G=G_1 \times ... \times G_{tn-t+1}$, then either $Q_u$ is an $(t+1,n)$-absorbing $\delta$-semiprimary hyperideal of $G_u$ for some $1 \leq u \leq tn-t+1$ and $\delta_i(Q_i)=G_i$ for each $1 \leq i \leq tn-t+1$ and $i \neq u$ or $Q_u$ and $Q_v$ are $(t,n)$-absorbing $\delta_{u,v}$-semiprimary hyperideals of $G_u$ and $G_v$, respectively, for some $u,v \in \{1,\cdots, tn-t+1\}$ and $ \delta_i(Q_i)=G_i$ for all $1 \leq i \leq tn-t+1$ but $i \neq u,v$. 
 \end{theorem}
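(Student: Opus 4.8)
The plan is to prove the statement by generalizing the three-case analysis used in the proof of Theorem~\ref{cart}, organised according to the set $A=\{\,i : \delta_i(Q_i)\neq G_i\,\}$ of ``active'' indices. The first observation I would record is that, since each $\delta_i(Q_i)$ is a hyperideal of a hyperring with scalar identity, one has $\delta_i(Q_i)=G_i$ precisely when $1_{G_i}\in\delta_i(Q_i)$; equivalently, $i\in A$ forces $1_{G_i}\notin\delta_i(Q_i)$. Combined with the hypothesis $\delta(Q)=\delta_1(Q_1)\times\cdots\times\delta_{tn-t+1}(Q_{tn-t+1})$, this reduces membership of a $k$-product in $\delta(Q)$ to a coordinatewise test that every step below exploits: a $k$-product lies in $\delta(Q)$ iff in each active coordinate $i\in A$ its $i$-th component lies in $\delta_i(Q_i)$, while the inactive coordinates impose no constraint. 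The whole theorem then amounts to showing $|A|\le 2$ together with the asserted absorbing property of the factors indexed by $A$.

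For the single-active-index conclusion I would proceed directly. Assume $A\subseteq\{u\}$ and let $a_1^{(t+1)n-t}\in G_u$ satisfy $k_u(a_1^{(t+1)n-t})\in Q_u$. Lifting each $a_j$ to the element $x_j\in G$ whose $u$-th coordinate is $a_j$ and whose other coordinates are $0$, the product $k(x_1^{(t+1)n-t})$ has $u$-th coordinate $k_u(a_1^{(t+1)n-t})\in Q_u$ and $0$ elsewhere, so it lies in $Q$. Applying the $(t+1,n)$-absorbing $\delta$-semiprimary hypothesis produces $tn-t+1$ of the $x_j$ whose $k$-product lies in $\delta(Q)$; reading off coordinate $u$ gives a $k$-product of the corresponding $a_j$ inside $\delta_u(Q_u)$. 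Hence $Q_u$ is $(t+1,n)$-absorbing $\delta_u$-semiprimary, and $\delta_i(Q_i)=G_i$ for $i\neq u$ by the definition of $A$.

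When $A=\{u,v\}$ has two elements I would establish the $(t,n)$-absorbing property of $Q_u$ (and symmetrically $Q_v$) by contradiction, imitating the third assertion of Theorem~\ref{cart}. Supposing $Q_u$ is not $(t,n)$-absorbing $\delta_u$-semiprimary yields $a_1^{tn-t+1}\in G_u$ with $k_u(a_1^{tn-t+1})\in Q_u$ while no $k$-product of $(t-1)n-t+2$ of the $a_j$ lies in $\delta_u(Q_u)$. I would then assemble $(t+1)n-t$ elements of $G$: the $tn-t+1$ elements carrying the $a_j$ in coordinate $u$ and $1_{G_v}$ in coordinate $v$, one element carrying $0$ in coordinate $v$ and $1$ elsewhere, and $n-2$ identity-type padding elements, with a $0$ planted in each inactive coordinate so that the full $k$-product falls in $Q$. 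Feeding this into the $(t+1,n)$-absorbing hypothesis forces a length-$(tn-t+1)$ sub-$k$-product into $\delta(Q)$; since $v$ is active, that sub-product must retain the element clearing coordinate $v$, and projecting to coordinate $u$ should then leave a $k$-product of $(t-1)n-t+2$ of the $a_j$ in $\delta_u(Q_u)$, the desired contradiction.

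The main obstacle is the remaining step, excluding $|A|\ge 3$, which is the genuinely new content beyond the two-factor Theorem~\ref{cart}; I also expect the delicate point of the previous paragraph to reappear here, namely controlling \emph{exactly how many} witness factors survive in each active coordinate. Here the plan is constructive: assuming three active indices, I would exhibit $(t+1)n-t$ elements of $G$ whose full $k$-product lies in $Q$ but for which every length-$(tn-t+1)$ sub-$k$-product falls outside $\delta(Q)$, contradicting the hypothesis. The elements would again be built from $0$'s, the scalar identities $1_{G_i}$, and witnesses in the three active coordinates, arranged so that ``clearing'' any one active coordinate — that is, driving its component into $\delta_i(Q_i)$ — consumes deletions in such a way that the $n-1$ factors one is permitted to drop can never simultaneously clear all three active coordinates while keeping the surviving products out of the forbidden range. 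Matching this combinatorial count against the exact arithmetic relating the $tn-t+1$ retained factors to the $(t+1)n-t$ total is where I expect the argument to demand the most careful bookkeeping; once it is in place, the two lower cases exhaust all possibilities and the theorem follows.
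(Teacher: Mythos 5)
There is a genuine gap, and it sits exactly where you flagged it. The paper's own ``proof'' of Theorem~\ref{cart3} is a single sentence deferring to the method of Theorem~\ref{cart}, so your write-up of the cases $|A|\le 1$ and $|A|=2$ is already more explicit than the source (and those two cases do go through along the lines you describe, modulo the padding bookkeeping needed to reach the full length $(t+1)n-t$). But the step you defer with ``once it is in place'' --- excluding $|A|\ge 3$ --- is not a bookkeeping difficulty to be overcome; it cannot be carried out, because the claim is false. Take $m=n=2$, $t=2$, so $tn-t+1=3$; let $R$ be a commutative ring (a Krasner $(2,2)$-hyperring), $P$ a prime ideal of $R$ with $P\neq R$, $G_1=G_2=G_3=R$, $Q_1=Q_2=Q_3=P$, and $\delta_i=\delta_0$, $\delta=\delta_0$, so that the compatibility hypothesis $\delta(Q_1\times Q_2\times Q_3)=\delta_0(Q_1)\times\delta_0(Q_2)\times\delta_0(Q_3)$ holds. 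If $a_1a_2a_3a_4\in P\times P\times P$, then for each coordinate $i$ primeness gives some index $j(i)$ with $a_{j(i)}^{\,i}\in P$; since $\{j(1),j(2),j(3)\}$ misses some $j_0\in\{1,2,3,4\}$, the product of the three factors other than $a_{j_0}$ lies in $P\times P\times P$. Hence $Q=P\times P\times P$ is a $(3,2)$-absorbing $\delta_0$-semiprimary hyperideal of $G$, yet all three $\delta_i(Q_i)=P$ are proper, so neither disjunct of the conclusion holds ($|A|=3$).

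Concretely, the reason your constructive plan for $|A|\ge 3$ cannot succeed is visible in this example: any family of $(t+1)n-t$ elements whose full $k$-product lands in $Q$ automatically admits, by the pigeonhole argument above, a sub-$k$-product of length $tn-t+1$ lying in $Q\subseteq\delta(Q)$, so no witness family with ``every sub-product outside $\delta(Q)$'' exists. You should therefore not try to patch this step; the correct reaction is that Theorem~\ref{cart3} as stated needs a weaker conclusion (allowing up to $tn-t+1$ active coordinates, each factor then being forced to be suitably absorbing), and that the paper's one-line appeal to Theorem~\ref{cart} glosses over precisely the point where the two-factor argument fails to generalize.
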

\begin{proof}
It can be  seen that the idea is true in a similar manner to the proof of Theorem \ref{cart}.
\end{proof}
Now, we want to extend the notion of $(t,n)$-absorbing $\delta$-semiprimary hyperideals to weakly  $(t,n)$-absorbing $\delta$-semiprimary hyperideal. Although different from each other in many aspects, they share quite a number of similar properties as well.
\begin{definition}
Let $\delta$ be a hyperideal expansion of  $G$ and $t$ be a positive integer. A proper hyperideal $Q$ of $G$ refers to  a weakly  $(t,n)$-absorbing $\delta$-semiprimary hyperideal if $a_1^{tn-t+1} \in G$ and   $0 \neq k(a_1^{tn-t+1}) \in Q$, then there exist $(t-1)n-t+2$ of the $a_i^,$s whose $k$-product is in $\delta(Q)$. 
\end{definition}
\begin{example} \label{classes} 
Suppose that $\mathbb{Z}_{12}$ is the set of all congruence classes of integers modulo $12$ and $H=\{1,5,7,11\}$ is multiplicative subgroup of units $\mathbb{Z}_{12}$. Construct $G$ as $\mathbb{Z}_{12}/H$. Then we have $G=\{\bar{0},\bar{1},\bar{2},\bar{3},\bar{4},\bar{6}\}$ in which $\bar{0}=\{0\}$, $\bar{1}=\{1,5,7,11\}$, $\bar{2}=\bar{10}=\{2,10\}$, $\bar{3}=\bar{9}=\{3,9\}$, $\bar{4}=\bar{8}=\{4,8\}$, $\bar{6}=\{6\}$. Consider Krasner hyperring $(G,\oplus,\star)$ that for all $\bar{a},\bar{b} \in G$, $\bar{a} \star \bar{b}=\overline{ab}$ and 2-ary hyperoperation $\oplus$ is defined as follows:

\[\begin{tabular}{|c|c|c|c|c|c|c|} 
\hline $\oplus$ & $\bar{0}$ & $\bar{1}$ & $\bar{2}$ & $\bar{3}$ & $\bar{4}$ & $\bar{6}$
\\ \hline $\bar{0}$ & $\bar{0}$ & $\bar{1}$ & $\bar{2}$ & $\bar{3}$& $\bar{4}$ & $\bar{6}$
\\ \hline $\bar{1}$ & $\bar{1}$ & $\bar{0},\bar{2},\bar{\bar{4}},\bar{6}$ & $\bar{1},\bar{3}$ & $\bar{2},\bar{4}$ & $\bar{1},\bar{3}$ & $\bar{1}$
\\ \hline $\bar{2}$ & $\bar{2}$ & $\bar{1},\bar{3}$ & $\bar{0},\bar{4}$ & $\bar{1}$ & $\bar{2},\bar{6}$ & $\bar{4}$
\\ \hline $\bar{3}$ & $\bar{3}$ & $\bar{2},\bar{4}$ & $\bar{1}$ & $\bar{0},\bar{6}$ & $\bar{1}$ & $\bar{3}$
\\ \hline $\bar{4}$ & $\bar{4}$ & $\bar{1},\bar{3}$ & $\bar{2},\bar{6}$ & $\bar{1}$ & $\bar{0},\bar{4}$ & $\bar{2}$
\\ \hline $\bar{6}$ & $\bar{6}$ & $\bar{1}$ & $\bar{4}$ & $\bar{3}$ & $\bar{2}$ & $\bar{0}$
\\ \hline
\end{tabular}\]

$\vspace{0.3cm}$

It is easy to see that the  hyperideal $Q=\{\bar{0},\bar{2},\bar{4},\bar{6}\}$ of $G$ is a $(2,2)$-absorbing $\delta_1$-semiprimary. 
\end{example}
\begin{theorem}
 If $Q$  is  a (weakly) $(t,n)$-absorbing $\delta$-semiprimary hyperideal of $G$, then $Q$ is (weakly) $(v,n)$-absorbing $\delta$-semiprimary for all $v>n$. 
 \end{theorem}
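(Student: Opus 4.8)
The plan is to prove the statement by induction on the absorbing parameter, for which it suffices to establish the single step from $t$ to $t+1$; iterating then gives every larger value (I read the intended range as $v>t$, the natural monotonicity in the first index). Throughout I would treat the weakly and non-weakly versions in parallel, since the only extra bookkeeping in the weak case is propagating the hypothesis that the relevant $k$-product is nonzero, and every reduction below is an honest equality of $k$-products and so preserves non-vanishing.

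First I would record the index arithmetic: an $(t,n)$-configuration involves $tn-t+1=t(n-1)+1$ factors and asks for a subproduct of $(t-1)n-t+2=(t-1)(n-1)+1$ of them, whereas an $(t+1,n)$-configuration involves $(t+1)(n-1)+1$ factors and asks for $t(n-1)+1$ of them. So suppose $k(a_1^{(t+1)n-t})\in Q$, and in the weak case that this product is nonzero. Set $b=k(a_{tn-t+1},a_{tn-t+2},\dots,a_{(t+1)n-t})$, the $k$-product of the last $n$ factors, which is a single element since $k$ is an ordinary operation. By $n$-ary associativity of $(G,k)$ one has $k(a_1^{tn-t},b)=k(a_1^{(t+1)n-t})\in Q$, an equality that keeps the product nonzero in the weak case. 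Now $a_1,\dots,a_{tn-t},b$ are exactly $t(n-1)+1$ elements, so the $(t,n)$-absorbing ($\delta$-semiprimary) hypothesis applies and produces $(t-1)(n-1)+1$ of them whose $k$-product lies in $\delta(Q)$.

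The core of the argument is a case split on whether $b$ is among the chosen factors. If $b$ is chosen, then re-expanding $b$ into its $n$ constituents turns the subproduct into a $k$-product of $(t-1)(n-1)+n=t(n-1)+1$ of the original $a_i$'s lying in $\delta(Q)$, which is precisely the subproduct demanded by the $(t+1,n)$-configuration. If $b$ is not chosen, the subproduct in $\delta(Q)$ is a $k$-product of $(t-1)(n-1)+1$ of $a_1,\dots,a_{tn-t}$; here I would pad it with $n-1$ further original factors, which are available because $(t+1)(n-1)+1-[(t-1)(n-1)+1]=2(n-1)\geq n-1$ original factors remain untouched, and then invoke that $\delta(Q)$ is a hyperideal together with $n$-ary associativity to conclude that the enlarged $k$-product of $t(n-1)+1$ factors still lies in $\delta(Q)$.

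The step I expect to be the main obstacle is the bookkeeping in this case split: making sure the padding in the second case yields a product of exactly the prescribed length $t(n-1)+1$ built from genuinely distinct original factors, and carefully justifying the regrouping through the $n$-ary associativity of $k$ and the absorption property $k(G^{i-1}_1,\delta(Q),G^n_{i+1})\subseteq\delta(Q)$ of the hyperideal $\delta(Q)$. Once the step $t\to t+1$ is secured for every admissible value of the parameter, induction closes the argument; and since the non-vanishing hypothesis transfers unchanged through the reduction $k(a_1^{(t+1)n-t})=k(a_1^{tn-t},b)$, the weakly version follows by the identical reasoning.
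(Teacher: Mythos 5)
Your proposal is correct and is essentially the argument the paper has in mind: the paper's proof is only a pointer to Theorem 4.4 of the cited reference on $(k,n)$-absorbing hyperideals, and that argument is exactly your reduction --- collapse the last $n$ factors into $b=k(a_{tn-t+1}^{(t+1)n-t})$, apply the $(t,n)$-hypothesis to $k(a_1^{tn-t},b)$, and split on whether $b$ occurs in the chosen subproduct, expanding it or padding with unused factors (using that $\delta(Q)$ is a hyperideal) as appropriate, with the nonzero hypothesis carried through the equality $k(a_1^{(t+1)n-t})=k(a_1^{tn-t},b)$ in the weak case. You are also right that the index in the statement should read $v>t$ rather than $v>n$; your arithmetic for both cases of the split checks out.
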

 \begin{proof}
 By using an argument similar to that in the proof of Theorem 4.4 in \cite{rev2}, one can  complete the proof.
 \end{proof}
\begin{theorem}\label{3}
Let $Q$ be a proper hyperideal of $G$. If $\delta(Q)$ is a (weakly) $(t,n)$-absorbing hyperideal of $G$, then $Q$ is a (weakly) $(t,n)$-absorbing $\delta$-semiprimary  hyperideal of $G$.
\end{theorem}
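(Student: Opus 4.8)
The plan is to exploit the single defining inclusion $Q \subseteq \delta(Q)$ that every hyperideal expansion must satisfy. This inclusion lets us transport any membership hypothesis about $Q$ up into $\delta(Q)$, at which point we can invoke the absorbing hypothesis directly. I would treat the non-weak and weak assertions in parallel, since the only difference between them is the harmless side condition $k(a_1^{tn-t+1}) \neq 0$, which transfers unchanged.

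First I would start from an arbitrary tuple $a_1^{tn-t+1} \in G$ with $k(a_1^{tn-t+1}) \in Q$ (respectively $0 \neq k(a_1^{tn-t+1}) \in Q$ in the weak case). Since $\delta$ is a hyperideal expansion of $G$, we have $Q \subseteq \delta(Q)$, and therefore $k(a_1^{tn-t+1}) \in \delta(Q)$; in the weak case this element is still nonzero, as passing to the larger hyperideal does not alter its value. Next, because $\delta(Q)$ is assumed to be a (weakly) $(t,n)$-absorbing hyperideal of $G$, applying its defining property to the element $k(a_1^{tn-t+1}) \in \delta(Q)$ furnishes a choice of $(t-1)n-t+2$ of the $a_i$'s whose $k$-product lies in $\delta(Q)$. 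This is exactly the conclusion demanded by the definition of a (weakly) $(t,n)$-absorbing $\delta$-semiprimary hyperideal, so $Q$ has the required property.

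There is essentially no genuine obstacle here: the statement is a formal consequence of the expansion axiom $Q \subseteq \delta(Q)$ combined with the observation that the $(t,n)$-absorbing condition on $\delta(Q)$ already places its $k$-product inside $\delta(Q)$ itself, which is precisely where the $\delta$-semiprimary notion wants it. The only step needing minor care is bookkeeping of the nonzero hypothesis in the weak case, and since the inclusion $Q \subseteq \delta(Q)$ leaves $k(a_1^{tn-t+1})$ unchanged, that condition carries over without adjustment.
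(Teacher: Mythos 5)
Your argument is correct and coincides with the paper's own proof: both rest on the inclusion $Q \subseteq \delta(Q)$ to transfer the hypothesis $k(a_1^{tn-t+1}) \in Q$ into $\delta(Q)$ and then invoke the (weakly) $(t,n)$-absorbing property of $\delta(Q)$, whose conclusion already lands in $\delta(Q)$ as the $\delta$-semiprimary definition requires. The paper merely phrases the same step contrapositively, so there is no substantive difference.
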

\begin{proof}
Let ($0 \neq k(a_1^{tn-t+1}) \in Q$) $k(a_1^{tn-t+1}) \in Q$ such  that all products of $(t-1)n-t+2$ of the $a_i^,$s, other than $k(a_1^{(t-1)n-t+2})$, are not in $\delta(Q)$.  Since $\delta(Q)$ is a (weakly) $(t,n)$-absorbing hyperideal of $G$ and $Q \subseteq \delta(Q)$, we conclude that  $k(a_1^{(t-1)n-t+2}) \in \delta(Q)$. This shows that $Q$ is a (weakly) $(t,n)$-absorbing $\delta$-semiprimary  hyperideal of $G$.
\end{proof}
\begin{theorem}\label{4}
Let $Q$ be a proper hyperideal of $G$ such that $\delta(\delta(Q))=\delta(Q)$. Then  $\delta(Q)$ is a (weakly) $(t,n)$-absorbing hyperideal of $G$ if and only if  $\delta(Q)$ is a (weakly) $(t,n)$-absorbing $\delta$-semiprimary  hyperideal of $G$.
\end{theorem}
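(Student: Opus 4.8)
The plan is to observe that the idempotency hypothesis $\delta(\delta(Q))=\delta(Q)$ makes the two notions coincide on the single hyperideal $\delta(Q)$, so that the equivalence reduces to unwinding the definitions, with Theorem \ref{3} disposing of one direction immediately.

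For the forward implication I would apply Theorem \ref{3} with the hyperideal $\delta(Q)$ playing the role of the hyperideal denoted $Q$ there. The hypothesis of that theorem, specialized to $\delta(Q)$, requires $\delta(\delta(Q))$ to be a (weakly) $(t,n)$-absorbing hyperideal of $G$; since $\delta(\delta(Q))=\delta(Q)$ by assumption, this is exactly the standing hypothesis that $\delta(Q)$ is a (weakly) $(t,n)$-absorbing hyperideal. The conclusion of Theorem \ref{3} then yields directly that $\delta(Q)$ is a (weakly) $(t,n)$-absorbing $\delta$-semiprimary hyperideal of $G$, as required.

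For the reverse implication I would argue straight from the definition. Assume $\delta(Q)$ is a (weakly) $(t,n)$-absorbing $\delta$-semiprimary hyperideal and take $a_1^{tn-t+1}\in G$ with (in the weakly case, $0\neq$) $k(a_1^{tn-t+1})\in\delta(Q)$. By definition there exist $(t-1)n-t+2$ of the $a_i$'s whose $k$-product lies in $\delta(\delta(Q))$. Invoking $\delta(\delta(Q))=\delta(Q)$, this $k$-product already lies in $\delta(Q)$ itself, which is precisely the defining condition for $\delta(Q)$ to be a (weakly) $(t,n)$-absorbing hyperideal.

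I do not anticipate a genuine obstacle: both directions collapse to the single algebraic fact $\delta(\delta(Q))=\delta(Q)$, which turns the clause ``whose $k$-product is in $\delta(\delta(Q))$'' in the $\delta$-semiprimary definition of $\delta(Q)$ into ``whose $k$-product is in $\delta(Q)$'', i.e. the absorbing definition. The one point I would state carefully is that the weakly and non-weakly versions proceed in parallel, the restriction $0\neq k(a_1^{tn-t+1})$ being carried unchanged through each step, so that a single write-up settles both cases at once.
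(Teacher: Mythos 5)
Your proof is correct and follows essentially the same route as the paper: the forward direction applies Theorem \ref{3} to the hyperideal $\delta(Q)$ and uses the idempotency $\delta(\delta(Q))=\delta(Q)$ to match the hypothesis, and the reverse direction unwinds the $\delta$-semiprimary definition of $\delta(Q)$ and again invokes idempotency to land back in $\delta(Q)$. No substantive difference from the paper's argument.
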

\begin{proof}
$\Longrightarrow $ Assume that $\delta(Q)$ is a (weakly) $(t,n)$-absorbing hyperideal of $G$. Since $\delta(\delta(Q))=\delta(Q)$, we are done by Theorem \ref{3}.

$\Longleftarrow$ Let $\delta(Q)$ be a (weakly) $(t,n)$-absorbing $\delta$-semiprimary  hyperideal of $G$. Suppose that ($0 \neq k(a_1^{tn-n+1}) \in \delta(Q)$) $k(a_1^{tn-n+1}) \in \delta(Q)$. Since $\delta(Q)$ is  a (weakly) $(t,n)$-absorbing $\delta$-semiprimary  hyperideal of $G$, then there exist $(t-1)n-t+2$ of the $a_i^,$s whose $k$-product is in $\delta(\delta(Q))$. Since $\delta(\delta(Q))=\delta(Q)$, then the $k$-product of the $(t-1)n-t+2$ of the $a_i^,$s   is in $\delta(Q)$ which means $\delta(Q)$ is a (weakly) $(t,n)$-absorbing hyperideal of $G$.
\end{proof}
\begin{theorem}\label{5}
Let $Q$ be a (weakly) $(t,n)$-absorbing $\delta$-semiprimary hyperideal of $G$ and $P$ be a proper hyperideal of $G$ such that $P \subseteq Q$.
 If $\delta(Q)=\delta(P)$, then $P$ is a (weakly) $(t,n)$-absorbing $\delta$-semiprimary hyperideal of $G$.
\end{theorem}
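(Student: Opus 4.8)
The plan is to argue directly from the definition, handling the weakly and the ordinary case in parallel since the only difference is the extra nonvanishing hypothesis. First I would fix an arbitrary tuple $a_1^{tn-t+1} \in G$ with $k(a_1^{tn-t+1}) \in P$ — and, in the weakly case, with the additional assumption $0 \neq k(a_1^{tn-t+1})$ — with the goal of exhibiting $(t-1)n-t+2$ of the $a_i$'s whose $k$-product lies in $\delta(P)$, which is exactly what the definition for $P$ demands.

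Next I would use the containment $P \subseteq Q$ to push the membership upward. Since $k(a_1^{tn-t+1}) \in P \subseteq Q$ (and the element is literally unchanged, so it remains nonzero in the weakly case), the hypothesis that $Q$ is a (weakly) $(t,n)$-absorbing $\delta$-semiprimary hyperideal applies and supplies $(t-1)n-t+2$ indices for which the corresponding $k$-product belongs to $\delta(Q)$. Finally, invoking the equality $\delta(Q)=\delta(P)$ identifies this same $k$-product as an element of $\delta(P)$, and since $P$ is assumed proper, the verification of the definition for $P$ is complete.

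The argument is essentially a single transfer step, so there is no genuine obstacle. The only point meriting care is the bookkeeping in the weakly case: one must confirm that the condition $0 \neq k(a_1^{tn-t+1})$ survives the passage from $P$ to $Q$, which it does automatically because the element itself is not altered. It is worth emphasizing that monotonicity of $\delta$ is not even needed here, as we exploit the sharper equality $\delta(P)=\delta(Q)$ rather than an inclusion; the containment $P \subseteq Q$ is used solely to justify the one membership $k(a_1^{tn-t+1}) \in Q$.
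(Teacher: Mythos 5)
Your proposal is correct and follows exactly the same route as the paper's proof: push the membership $k(a_1^{tn-t+1})\in P$ up to $Q$ via $P\subseteq Q$, apply the hypothesis on $Q$, and transfer the resulting $k$-product from $\delta(Q)$ to $\delta(P)$ using the assumed equality. Your remark that monotonicity of $\delta$ is not needed is a correct (if minor) observation beyond what the paper states.
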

\begin{proof}
Assume that ($0 \neq k(a_1^{tn-t+1}) \in P$) $k(a_1^{tn-t+1}) \in P$ for $a_1^{tn-t+1} \in G$. By the assumption, we get ($0 \neq k(a_1^{tn-t+1}) \in Q$) $k(a_1^{tn-t+1}) \in Q$ which implies  there exist $(t-1)n-t+2$ of the $a_i^,$s whose $k$-product is in $\delta(Q)$ because $Q$ is a (weakly) $(t,n)$-absorbing $\delta$-semiprimary hyperideal of $G$. From $\delta(Q)=\delta(P)$, it follows that the $k$-product of  $(t-1)n-t+2$ of the $a_i^,$s is in $\delta(P)$ which means $P$ is a (weakly) $(t,n)$-absorbing $\delta$-semiprimary hyperideal of $G$.
\end{proof}
%\begin{corollary}
%Let $Q$ be a (weakly) $(t,n)$-absorbing $\delta$-semiprimary hyperideal of $G$ and $P$ be a proper hyperideal of $G$ such that $P \subseteq Q$.
% If $Q \subseteq rad(0)$, then $P$ is a (weakly) $(t,n)$-absorbing $q$-primary hyperideal of $G$.
%\end{corollary}
%\begin{proof}
%By the hypothesis, we conclude that 
%\end{proof}
\begin{definition}
Let $Q$ be a proper hyperideal of $G$. $Q$ refers to a strongly (weakly) $(t,n)$-absorbing $\delta$-semiprimary hyperideal if ($0 \neq k(Q_1^{tn-t+1}) \subseteq Q$) $k(Q_1^{tn-t+1}) \subseteq Q$ for some hyperideals $Q_1^{tn-t+1}$ of $G$, then there exist $(t-1)n-t+2$ of $Q_i^,$s whose $k$-product is a subset of $\delta(Q)$.
\end{definition}
\begin{definition}
Assume that $G$ is a commutative Krasner $(m,2)$-hyperring and $Q$ is a weakly $(2,2)$-absorbing $\delta$-semiprimary hyperideal of $G$. Then $(x,y,z)$ is said to be an $\delta$-$(2,2)$-zero of $Q$ for some $x,y,z \in G$ if $k(x,y,z)=0$, $k(x,y) \notin \delta(Q)$,  $k(y,z) \notin \delta(Q)$ and $k(x,z) \notin \delta(Q)$.
\end{definition}
\begin{theorem} \label{str}
Let  $G$ be a commutative Krasner $(m,2)$-hyperring, $Q$  a weakly $(2,2)$-absorbing $\delta$-semiprimary hyperideal of $G$ and $k(Q_1,x,y) \subseteq Q$ for some $x,y \in G$ and a hyperideal $Q_1$ of $G$. If $(q,x,y)$ is not a $\delta$-$(2,2)$-zero of $Q$ for all $q\in Q_1$ and $k(x,y) \notin \delta(Q)$, then $k(Q_1,x) \subseteq \delta(Q)$ or $k(Q_1,y) \subseteq \delta(Q)$.
\end{theorem}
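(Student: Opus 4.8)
The plan is to argue by contradiction. Suppose the conclusion fails, so that $k(Q_1,x) \not\subseteq \delta(Q)$ and $k(Q_1,y) \not\subseteq \delta(Q)$; then I can fix witnesses $q_1,q_2 \in Q_1$ with $k(q_1,x) \notin \delta(Q)$ and $k(q_2,y) \notin \delta(Q)$. The first goal is to extract, for each witness, one product that \emph{does} land in $\delta(Q)$. Consider $q_1$: since $q_1 \in Q_1$ we have $k(q_1,x,y) \in Q$. If $k(q_1,x,y) \neq 0$, the weakly $(2,2)$-absorbing $\delta$-semiprimary property forces one of $k(q_1,x)$, $k(x,y)$, $k(q_1,y)$ into $\delta(Q)$; since $k(x,y) \notin \delta(Q)$ by hypothesis and $k(q_1,x) \notin \delta(Q)$ by choice, the survivor is $k(q_1,y) \in \delta(Q)$. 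If instead $k(q_1,x,y)=0$, then because $(q_1,x,y)$ is \emph{not} a $\delta$-$(2,2)$-zero of $Q$ while $k(x,y)\notin\delta(Q)$ and $k(q_1,x)\notin\delta(Q)$, again $k(q_1,y) \in \delta(Q)$. The symmetric argument applied to $q_2$ yields $k(q_2,x) \in \delta(Q)$.

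Next I combine the two witnesses additively. Pick $q_3$ in the hypersum $h(q_1,q_2,0^{(m-2)})$; since $Q_1$ is an $m$-ary subhypergroup containing $0$, we have $q_3 \in Q_1$, and distributivity of $k$ over $h$ gives $k(q_3,x) \in h(k(q_1,x),k(q_2,x),0^{(m-2)})$ and $k(q_3,y) \in h(k(q_1,y),k(q_2,y),0^{(m-2)})$. Using that $\delta(Q)$ is a hyperideal (hence closed under $h$ and under negatives) together with the reversibility of the canonical hypergroup, I claim $k(q_3,x) \notin \delta(Q)$: otherwise, from $k(q_3,x) \in h(k(q_1,x),k(q_2,x),0^{(m-2)})$ and $k(q_2,x) \in \delta(Q)$ one reverses to get $k(q_1,x) \in \delta(Q)$, a contradiction. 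Symmetrically, since $k(q_1,y) \in \delta(Q)$ while $k(q_2,y) \notin \delta(Q)$, one obtains $k(q_3,y) \notin \delta(Q)$.

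Finally I apply the defining property to $q_3$. We have $k(q_3,x,y) \in Q$ with $k(q_3,x)$, $k(x,y)$, $k(q_3,y)$ all outside $\delta(Q)$. If $k(q_3,x,y) \neq 0$, this contradicts $Q$ being weakly $(2,2)$-absorbing $\delta$-semiprimary; if $k(q_3,x,y)=0$, then $(q_3,x,y)$ is a $\delta$-$(2,2)$-zero of $Q$ with $q_3 \in Q_1$, contradicting the hypothesis that no such $q$ gives a $\delta$-$(2,2)$-zero. Either way we reach a contradiction, so $k(Q_1,x) \subseteq \delta(Q)$ or $k(Q_1,y) \subseteq \delta(Q)$.

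I expect the main obstacle to be the additive-combination step in the second paragraph: correctly handling the $m$-ary hyperaddition (choosing $q_3$ via the padding $h(q_1,q_2,0^{(m-2)})$, invoking distributivity of $k$ over $h$, and using reversibility to propagate non-membership in $\delta(Q)$ from $q_1,q_2$ to $q_3$). The remaining care is bookkeeping: treating the zero/nonzero dichotomy for $k(q_i,x,y)$ uniformly across $q_1$, $q_2$, and $q_3$, so that both the weakly-absorbing alternative and the $\delta$-$(2,2)$-zero hypothesis are correctly invoked in each case.
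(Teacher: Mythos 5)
Your proof is correct and follows essentially the same route as the paper's: fix witnesses $q_1,q_2$, deduce $k(q_1,y)\in\delta(Q)$ and $k(q_2,x)\in\delta(Q)$ from the non-$\delta$-$(2,2)$-zero/weakly absorbing dichotomy, and then derive a contradiction from the hypersum $h(q_1,q_2,0^{(m-2)})$ via distributivity and reversibility. If anything, your pointwise choice of a single $q_3\in h(q_1,q_2,0^{(m-2)})$ is slightly more careful than the paper's set-level containment claim, which tacitly assumes all elements of the hypersum fall on the same side of the ``or''.
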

\begin{proof}
Let $k(Q_1,x,y) \subseteq Q$ for some $x,y \in G$ and a hyperideal $Q_1$ of $G$ but $k(x,y) \notin \delta(Q)$, $k(Q_1,x) \nsubseteq \delta(Q)$ and $k(Q_1,y) \nsubseteq \delta(Q)$. Then we have $k(q_1,x) \nsubseteq \delta(Q)$ and $k(q_2,y) \nsubseteq \delta(Q)$ for some $q_1, q_2 \in Q_1$. Since $(q_1,x,y)$ is not a $\delta$-$(2,2)$-zero of $Q$ and $k(q_1,x,y) \in Q$, we get $k(q_1,y) \in \delta(Q)$. Similarly, we have $k(q_2,x) \in \delta(Q)$. Note that $k(h(q_1,q_2,0^{(m-2)}),x,y)=h(k(q_1,x,y),k(q_2,x,y),0^{(m-2)}) \subseteq Q$. Then we obtain $k(h(q_1,q_2,0^{(m-2)}),x)=h(k(q_1,x),k(q_2,x),0^{(m-2)}) \subseteq \delta(Q)$ or $k(h(q_1,q_2,0^{(m-2)}),y)=h(k(q_1,y),k(q_2,y),0^{(m-2)}) \subseteq \delta(Q)$. This follows that $k(q_1,x) \in h(-k(q_2,x),0^{(m-1)}) \subseteq \delta(Q)$ or $k(q_2,y) \in h(-k(q_1,y),0^{(m-1)}) \subseteq \delta(Q)$ which both of them are a contradiction. Consequently, $k(Q_1,x) \subseteq \delta(Q)$ or $k(Q_1,y) \subseteq \delta(Q)$.
\end{proof}
\begin{theorem} \label{str2}
Let  $G$ be a commutative Krasner $(m,2)$-hyperring, $Q$  a weakly $(2,2)$-absorbing $\delta$-semiprimary hyperideal of $G$ and $k(Q_1,Q_2,x) \subseteq Q$ for some $x\in G$ and two hyperideals $Q_1, Q_2$ of $G$. If $(q_1,q_2,x)$ is not a $\delta$-$(2,2)$-zero of $Q$ for all $q_1\in Q_1$ and $q_2 \in Q_2$, then $k(Q_1,x) \subseteq  \delta(Q)$ or $k(Q_2,x) \subseteq \delta(Q)$ or $k(Q_1,Q_2) \subseteq \delta(Q)$.
\end{theorem}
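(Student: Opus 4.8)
The plan is to prove the logically equivalent implication: assuming $k(Q_1,x) \nsubseteq \delta(Q)$ and $k(Q_2,x) \nsubseteq \delta(Q)$, I will show $k(Q_1,Q_2) \subseteq \delta(Q)$. Since $(\neg A \wedge \neg B) \Rightarrow C$ is equivalent to $A \vee B \vee C$, this yields exactly the stated three-way disjunction. Accordingly, I fix witnesses $a \in Q_1$ with $k(a,x) \notin \delta(Q)$ and $b \in Q_2$ with $k(b,x) \notin \delta(Q)$, and I aim to prove $k(q_1,q_2) \in \delta(Q)$ for every $q_1 \in Q_1$ and $q_2 \in Q_2$.

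The first step is to extract the pointwise dichotomy that will replace the definition: for all $q_1 \in Q_1$ and $q_2 \in Q_2$,
\begin{equation}
k(q_1,q_2) \in \delta(Q) \quad\text{or}\quad k(q_1,x) \in \delta(Q) \quad\text{or}\quad k(q_2,x) \in \delta(Q). \tag{$*$}
\end{equation}
Indeed, $k(q_1,q_2,x) \in Q$ by the hypothesis $k(Q_1,Q_2,x) \subseteq Q$. If $k(q_1,q_2,x) \neq 0$, then $0 \neq k(q_1,q_2,x) \in Q$ and the weakly $(2,2)$-absorbing $\delta$-semiprimary property forces one of the three pairwise $k$-products into $\delta(Q)$; if $k(q_1,q_2,x)=0$, then, since $(q_1,q_2,x)$ is by assumption not a $\delta$-$(2,2)$-zero of $Q$, again one of the three pairwise products lies in $\delta(Q)$. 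Applying $(*)$ to the pair $(a,b)$ and using $k(a,x),k(b,x) \notin \delta(Q)$ yields the basic relation $k(a,b) \in \delta(Q)$, which is used repeatedly below.

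Now fix $q_1 \in Q_1$, $q_2 \in Q_2$ and suppose toward a contradiction that $k(q_1,q_2) \notin \delta(Q)$; by $(*)$ at least one of $k(q_1,x)$, $k(q_2,x)$ lies in $\delta(Q)$. The engine of the argument is a perturbation device: whenever $k(q_1,x) \in \delta(Q)$, I replace $q_1$ by an element $q_1' \in h(q_1,a,0^{(m-2)}) \subseteq Q_1$, and since $k(q_1',x) \in h(k(q_1,x),k(a,x),0^{(m-2)})$ with $k(q_1,x) \in \delta(Q)$ but $k(a,x) \notin \delta(Q)$, reversibility of the canonical $m$-ary hypergroup forces $k(q_1',x) \notin \delta(Q)$ (otherwise $k(a,x) \in h(k(q_1',x),-k(q_1,x),0^{(m-2)}) \subseteq \delta(Q)$). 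Symmetrically I perturb $q_2$ by $b$ when $k(q_2,x) \in \delta(Q)$. Having arranged the perturbed elements so that $k(\cdot,x) \notin \delta(Q)$, an application of $(*)$ forces their $k$-product into $\delta(Q)$; expanding this product by distributivity as a member of a hypersum of $k(q_1,q_2)$ together with cross terms such as $k(q_1,b)$, $k(a,q_2)$, $k(a,b)$ — each of which is shown to lie in $\delta(Q)$ by a further application of $(*)$ together with the cancellation trick above — and then using reversibility once more isolates $k(q_1,q_2)$ inside $\delta(Q)$, contradicting the choice of $q_1,q_2$. Splitting according to which of $k(q_1,x),k(q_2,x)$ lie in $\delta(Q)$ gives three cases; the case where both lie in $\delta(Q)$ requires perturbing both coordinates simultaneously, while the other two require only one perturbation.

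I expect the main obstacle to be the bookkeeping in the doubly-perturbed case: after writing $q_1' \in h(q_1,a,0^{(m-2)})$ and $q_2' \in h(q_2,b,0^{(m-2)})$, the element $k(q_1',q_2')$ must be expanded, via repeated distributivity and the associativity of the $m$-ary hyperaddition, as a member of an iterated hypersum of the four products $k(q_1,q_2)$, $k(q_1,b)$, $k(a,q_2)$, $k(a,b)$. One must verify that the latter three all lie in $\delta(Q)$ (the auxiliary facts $k(q_1,b),k(a,q_2)\in\delta(Q)$ follow by applying $(*)$ to $(q_1',b)$ and $(a,q_2')$ and cancelling $k(a,b)$), and then invoke reversibility of the canonical hypergroup together with the fact that $\delta(Q)$ is a hyperideal, hence closed under $h$ and under negatives, to conclude $k(q_1,q_2) \in \delta(Q)$. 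These steps are routine manipulations in a Krasner $(m,2)$-hyperring, entirely parallel to the computations carried out in the proof of Theorem \ref{str}.
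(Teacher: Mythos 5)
Your proof is correct, but it is organized differently from the paper's. The paper negates all three conclusions simultaneously, picks witnesses $q,q_1\in Q_1$ for the two failures involving $Q_1$, and then invokes Theorem~\ref{str} (the single-hyperideal version) repeatedly --- on the configurations $(Q_2;q,x)$, $(Q_2;q_1,x)$ and on $h(q,q_1,0^{(m-2)})\subseteq Q_1$ --- before reaching a contradiction via the same reversibility/cancellation trick you use. You bypass Theorem~\ref{str} entirely: you extract the pointwise trichotomy $(*)$ directly from the definitions (correctly splitting into the cases $k(q_1,q_2,x)\neq 0$, handled by the weakly absorbing property, and $k(q_1,q_2,x)=0$, handled by the no-$\delta$-$(2,2)$-zero hypothesis), and then run the perturbation argument at the level of individual elements $q_1\in Q_1$, $q_2\in Q_2$. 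The price is the doubly-perturbed case, where $k\bigl(h(q_1,a,0^{(m-2)}),h(q_2,b,0^{(m-2)})\bigr)$ must be expanded by distributivity into a hypersum of the four products $k(q_1,q_2)$, $k(q_1,b)$, $k(a,q_2)$, $k(a,b)$ and the last three cancelled; the paper avoids this by letting Theorem~\ref{str} absorb one of the two perturbations. What your version buys is self-containment and a cleaner logical shape (proving $(\neg A\wedge\neg B)\Rightarrow C$ rather than refuting $\neg A\wedge\neg B\wedge\neg C$), at the cost of heavier bookkeeping. Every ingredient you rely on --- $\delta(Q)$ being a hyperideal closed under $h$ and negatives, distributivity of $k$ over $h$, and reversibility of the canonical $m$-ary hypergroup --- is exactly what the paper itself uses, so the argument goes through.
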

\begin{proof}
Let $k(Q_1,Q_2,x) \subseteq Q$, $k(Q_1,x) \nsubseteq  \delta(Q)$, $k(Q_2,x) \nsubseteq \delta(Q)$ and  $k(Q_1,Q_2) \nsubseteq \delta(Q)$. Then we get $k(q,x) \notin \delta(Q)$ and $k(q_1,Q_2) \nsubseteq \delta(Q)$ for some $q,q_1 \in Q_1$. By Theorem \ref{str}, we conclude that $k(q,Q_2) \subseteq \delta(Q)$ because $k(q,Q_2,x) \subseteq Q$, $k(q,x) \notin \delta(Q)$ and $k(Q_2,x) \nsubseteq \delta(Q)$. Also, from Theorem \ref{str}, we obtain $k(q_1,x) \in \delta(Q)$. Note that $k(h(q,q_1,0^{(m-2)}),Q_2,x)=h(k(q,Q_2,x),k(q_1,Q_2,x),0^{(m-2)}) \subseteq Q$. Then we have   $k(h(q,q_1,0^{(m-2)}),Q_2)=h(k(q,Q_2),k(q_1,Q_2),0^{(m-2)}) \subseteq \delta(Q)$ which means $k(q_1,Q_2) \subseteq  h(-k(q,Q_2),0^{(m-1)}) \subseteq \delta(Q)$ or $k(h(q,q_1,0^{(m-2)}),x)=h(k(q,x),k(q_1,x),0^{(m-2)}) \subseteq \delta(Q)$ which implies $k(q,x) \in h(-k(q_1,x),0^{(m-1)}) \subseteq \delta(Q)$. This is a contradiction. Hence $k(Q_1,x) \subseteq  \delta(Q)$ or $k(Q_2,x) \subseteq \delta(Q)$ or $k(Q_1,Q_2) \subseteq \delta(Q)$.
\end{proof}
\begin{definition}
Suppose that $G$ is a commutative Krasner $(m,2)$-hyperring and $Q_1^3,Q$ be some proper hyperideals of $G$ such that  $Q$ is a weakly $(2,2)$-absorbing $\delta$-semiprimary hyperideal of $G$.  $Q$ is said to be a free $\delta$-$(2,2)$-zero with respect to $k(Q_1^3)$ if $(q_1,q_2,q_3)$ is not a $\delta$-$(2,2)$-zero of $Q$ for every  $q_1 \in Q_1$, $q_2 \in Q_2$ and $q_3 \in Q_3$.
\end{definition}
\begin{theorem} \label{str3}
Let  $G$ be a commutative Krasner $(m,2)$-hyperring, $Q$  a weakly $(2,2)$-absorbing $\delta$-semiprimary hyperideal of $G$ and $0 \neq k(Q_1,Q_2,Q_3) \subseteq Q$ for some  hyperideals $Q_1^3$ of $G$. If $Q$ is a free $\delta$-$(2,2)$-zero with respect to $k(Q_1^3)$, then $k(Q_1^2) \subseteq  \delta(Q)$ or $k(Q_2^3) \subseteq \delta(Q)$ or $k(Q_1,Q_3) \subseteq \delta(Q)$.
\end{theorem}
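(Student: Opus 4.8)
The plan is to bootstrap from the two-hyperideal statement (Theorem \ref{str2}) by treating $Q_3$ as a varying parameter and then exploiting the additive structure of the canonical $m$-ary hypergroup. First I would dispose of the trivial case $k(Q_1,Q_2) \subseteq \delta(Q)$, for which the conclusion already holds, and henceforth assume $k(Q_1,Q_2) \nsubseteq \delta(Q)$. The key reduction is to fix an arbitrary $q_3 \in Q_3$ and apply Theorem \ref{str2} to the inclusion $k(Q_1,Q_2,q_3) \subseteq Q$: since $Q$ is a free $\delta$-$(2,2)$-zero with respect to $k(Q_1^3)$, the triple $(q_1,q_2,q_3)$ is never a $\delta$-$(2,2)$-zero of $Q$ for $q_1 \in Q_1$ and $q_2 \in Q_2$, so the hypotheses of Theorem \ref{str2} are met. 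Because $k(Q_1,Q_2) \nsubseteq \delta(Q)$, that theorem forces $k(Q_1,q_3) \subseteq \delta(Q)$ or $k(Q_2,q_3) \subseteq \delta(Q)$ for each individual $q_3 \in Q_3$.

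Next I would upgrade this pointwise dichotomy to a uniform statement over all of $Q_3$. Suppose, for contradiction, that neither $k(Q_1,Q_3) \subseteq \delta(Q)$ nor $k(Q_2,Q_3) \subseteq \delta(Q)$ holds. Then I can choose witnesses $a,b \in Q_3$ with $k(Q_1,a) \nsubseteq \delta(Q)$ and $k(Q_2,b) \nsubseteq \delta(Q)$; applying the pointwise dichotomy of the previous paragraph to $a$ and to $b$ separately, these forced choices yield $k(Q_2,a) \subseteq \delta(Q)$ and $k(Q_1,b) \subseteq \delta(Q)$, respectively.

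The decisive step is to combine $a$ and $b$ additively. Since $Q_3$ is a hyperideal, every $c \in h(a,b,0^{(m-2)})$ again lies in $Q_3$, so the pointwise dichotomy applies to $c$ as well. If $k(Q_1,c) \subseteq \delta(Q)$, then reversibility in the canonical $m$-ary hypergroup gives $a \in h(c,-b,0^{(m-2)})$, whence distributivity yields $k(Q_1,a) \subseteq h(k(Q_1,c),-k(Q_1,b),0^{(m-2)}) \subseteq \delta(Q)$, contradicting the choice of $a$; if instead $k(Q_2,c) \subseteq \delta(Q)$, the symmetric computation $k(Q_2,b) \subseteq h(k(Q_2,c),-k(Q_2,a),0^{(m-2)}) \subseteq \delta(Q)$ contradicts the choice of $b$. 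Both alternatives being impossible, the supposition fails and we conclude $k(Q_2,Q_3) \subseteq \delta(Q)$ or $k(Q_1,Q_3) \subseteq \delta(Q)$, which together with the dismissed case completes the required trichotomy.

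I expect the main obstacle to be this last paragraph: one must verify that the additive witness $c$ lands in the correct branch of the dichotomy and that the passage from $c$ back to $a$ (respectively $b$) via reversibility and distributivity keeps every $k$-product inside $\delta(Q)$, using that $\delta(Q)$ is itself a hyperideal and hence closed under $h$ and under inverses. This is precisely the kind of non-single-valued hypergroup manipulation already carried out in the proofs of Theorems \ref{str} and \ref{str2}, so it should go through, but it is the only point where the hyperoperation really has to be handled with care rather than mimicking the ordinary-ring argument verbatim.
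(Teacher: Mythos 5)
Your proof is correct and follows essentially the same route as the paper's: reduce to Theorem \ref{str2}, extract element witnesses for the failed inclusions, and combine them via $h(\cdot,\cdot,0^{(m-2)})$ together with reversibility and distributivity to force a contradiction. The only difference is that you draw the witnesses from $Q_3$ while the paper draws them from $Q_1$, which is immaterial by symmetry; if anything your write-up is cleaner, since the paper's version negates the disjunction incorrectly ("or" where "and" is meant) and contains a circular typo in its second application of Theorem \ref{str2}.
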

\begin{proof}
Suppose that $k(Q_1^3) \subseteq Q$ but $k(Q_1^2) \nsubseteq  \delta(Q)$ or $k(Q_2^3) \nsubseteq \delta(Q)$ or $k(Q_1,Q_3) \nsubseteq \delta(Q)$.
This implies that $k(q,Q_2) \nsubseteq \delta(Q)$ and $k(q_1,Q_3) \nsubseteq \delta(Q)$ for some $q,q_1 \in Q_1$. By Theorem \ref{str2}, we get $k(q,Q_3) \subseteq \delta(Q)$ because $k(q,Q_2^3) \subseteq Q$, $k(Q_2^3) \nsubseteq \delta(Q)$ and $k(q,Q_2) \nsubseteq \delta(Q)$. Also, from Theorem \ref{str2}, we obtain  $k(q_1,Q_2) \nsubseteq \delta(Q)$ as $k(q_1,Q_2^3) \subseteq Q$, $k(Q_2^3) \nsubseteq \delta(Q)$ and $k(q_1,Q_2) \nsubseteq \delta(Q)$. Since $k(h(q,q_1),Q_2^3) \subseteq Q$ and $k(Q_2^3) \nsubseteq \delta(Q)$, we have $k(h(q,q_1,0^{(m-2)}),Q_2)=h(k(q,Q_2),k(q_1,Q_2),0^{(m-2)}) \subseteq \delta(Q)$ or $k(h(q,q_1,0^{(m-2)}),Q_3)=h(k(q,Q_3),k(q_1,Q_3),0^{(m-2)}) \subseteq \delta(Q)$. In the first case, we conclude that $k(q,Q_2) \in h(-k(q_1,Q_2),0^{(m-1)}) \subseteq \delta(Q)$, a controdiction. Moreover, the second case leads to a contradiction because $k(q_1,Q_3) \in h(-k(q,Q_3),0^{(m-1)}) \subseteq \delta(Q)$. Thus $k(Q_1^2) \subseteq  \delta(Q)$ or $k(Q_2^3) \subseteq \delta(Q)$ or $k(Q_1,Q_3) \subseteq \delta(Q)$.
\end{proof}
\begin{definition}
Assume that $Q$ is a weakly $(k,n)$-absorbing $\delta$-semiprimary hyperideal of $G$. Then $(a_1,\cdots,a_{tn-t+1})$ is called $\delta$-$(t,n)$-zero of $Q$ if $k(a_1^{tn-t+1})=0$ and none $k$-product of the terms $(t-1)n-t+2$ of $a_i^,$s is in $\delta(Q)$.
\end{definition}
\begin{theorem} \label{str4}
Assume that  $Q$ is  a weakly $(t,n)$-absorbing $\delta$-semiprimary hyperideal of $G$ and $k(a_1,\cdots,\hat{a}_{i_1},\cdots,\hat{a}_{i_2},\cdots,\hat{a}_{i_s},\cdots,a_{tn-t+1},Q_1^s) \subseteq Q$ for some $a_1^{tn-t+1}\in G$ and some hyperideals $Q_1, \cdots Q_s$ of $G$ such that $1 \leq i_1,\cdots,i_s \leq tn-t+1$ and $1 \leq s \leq (t-1)n-t+2$. If $(a_1,\cdots,\hat{a}_{i_1},\cdots,\hat{a}_{i_2},\cdots,\hat{a}_{i_s},\cdots,a_{tn-t+1},q_1^s)$ is not a $\delta$-$(t,n)$-zero of $Q$ for all $q_i\in Q_i$, then   $k$-product of $(t-1)n-t+2$ of $a_1,\cdots,\hat{a}_{i_1},\cdots,\hat{a}_{i_2},\cdots,\hat{a}_{i_s},\cdots,a_{tn-t+1},Q_1^s$ including at least one of the $Q_i^,$s is in $\delta(Q)$. 
\end{theorem}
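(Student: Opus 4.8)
The plan is to argue by contradiction, extracting element-level witnesses from the hyperideals $Q_1,\dots,Q_s$ and then merging them through the distributivity of $k$ over $h$, exactly in the spirit of Theorems \ref{str}, \ref{str2} and \ref{str3}, which play the role of the small cases. First I would record the basic consequence of the hypotheses: fix any $q_i \in Q_i$. Because $k(a_1,\dots,\hat{a}_{i_1},\dots,\hat{a}_{i_s},\dots,a_{tn-t+1},Q_1^s)\subseteq Q$, the full product $k(a_1,\dots,\hat{a}_{i_1},\dots,q_1,\dots,q_s)$ lies in $Q$; and since the tuple $(a_1,\dots,\hat{a}_{i_1},\dots,q_1^s)$ is, by hypothesis, not a $\delta$-$(t,n)$-zero of $Q$, either this product is nonzero or one of its $k$-products of $(t-1)n-t+2$ of the entries already lies in $\delta(Q)$. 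In the first case the weakly $(t,n)$-absorbing $\delta$-semiprimary property of $Q$ forces such a subproduct into $\delta(Q)$ as well. Hence, for \emph{every} choice of witnesses $q_1,\dots,q_s$, at least one $k$-product of $(t-1)n-t+2$ of the entries is in $\delta(Q)$.

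Next I would assume, toward a contradiction, that no $k$-product of $(t-1)n-t+2$ of the entries $a_1,\dots,\hat{a}_{i_1},\dots,a_{tn-t+1},Q_1^s$ involving at least one $Q_i$ is contained in $\delta(Q)$. For each such candidate subproduct I can then choose an element of the relevant $Q_i$ witnessing its failure to lie in $\delta(Q)$. The heart of the argument is to merge these finitely many witnesses: for $q,q'\in Q_i$ the element $h(q,q',0^{(m-2)})$ again lies in $Q_i$, and distributivity together with the fact that $0$ is a zero of $k$ gives $k(\dots,h(q,q',0^{(m-2)}),\dots)=h(k(\dots,q,\dots),k(\dots,q',\dots),0^{(m-2)})$. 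Feeding the merged witness back into the basic consequence above, and using that $(G,h)$ is canonical (so inverses exist and $\delta(Q)$ is closed under $h$ and negation), lets me derive relations of the form $k(\dots,q,\dots)\in h(-k(\dots,q',\dots),0^{(m-1)})\subseteq\delta(Q)$, contradicting the choice of witnesses. This is the same cancellation mechanism used to close Theorems \ref{str}--\ref{str3}.

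I would organize the bookkeeping by induction on $s$. The case $s=1$ generalizes Theorem \ref{str}: fixing the single hyperideal's witness and running the basic consequence reduces the statement to a finite alternative resolved by the merging trick. For the inductive step I would freeze a witness $q_s\in Q_s$, apply the induction hypothesis to the remaining $s-1$ hyperideals (whose product with the frozen $q_s$ still lands in $Q$), and then let $q_s$ vary, once more using the $h$-merging of witnesses to promote the element-level conclusion to the ideal-level statement that some $k$-product of $(t-1)n-t+2$ of the entries including at least one $Q_i$ satisfies $k(\dots,Q_i,\dots)\subseteq\delta(Q)$.

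The step I expect to be the main obstacle is isolating, among the guaranteed $k$-products of $(t-1)n-t+2$ entries, one that actually involves a $Q_i$: a priori the subproduct landing in $\delta(Q)$ could consist entirely of the fixed elements $a_j$, and such a subproduct is insensitive to the witnesses, so it cannot be contradicted by the merging trick. Controlling this is precisely the role played by the auxiliary hypothesis $k(x,y)\notin\delta(Q)$ in the base case of Theorem \ref{str}. In the general setting I would therefore ensure, at each stage of the induction, that the all-$a_j$ subproducts are kept out of $\delta(Q)$ (or disposed of separately via the counting $s\ge n$, where by pigeonhole every $(t-1)n-t+2$ subproduct must already meet some $Q_i$), so that the subproduct forced into $\delta(Q)$ is guaranteed to contain one of the chosen witnesses.
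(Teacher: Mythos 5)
Your plan coincides with the paper's own proof in every essential respect: induction on $s$, the base case handled by the merging-and-cancellation device of Theorem \ref{str} (replace a witness $q$ by $h(a,q,0^{(m-2)})$, distribute $k$ over $h$, and cancel in the canonical hypergroup to push a forbidden subproduct into $\delta(Q)$), and the inductive step obtained by freezing witnesses in $Q_1,\dots,Q_s$ and letting them vary. The one point where you part company with the paper is your final paragraph, and there you are right and the paper is not. The ``basic consequence'' only guarantees that \emph{some} $k$-product of $(t-1)n-t+2$ of the entries lies in $\delta(Q)$; when $s\le n-1$ that subproduct may consist entirely of the fixed elements $a_j$, it is insensitive to the choice of witnesses, and no amount of $h$-merging can contradict it. The paper's proof hides this by opening the base case with ``assume that \emph{all} products of $(t-1)n-t+2$ of $a_1,\dots,\hat a_{i_1},\dots,a_{tn-t+1},Q_1$ are not in $\delta(Q)$,'' which is strictly stronger than the negation of the stated conclusion (the conclusion concerns only subproducts containing some $Q_i$); so what the paper actually establishes is the weaker claim that some subproduct, possibly one avoiding every $Q_i$, lies in $\delta(Q)$.

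Without the extra hypothesis you ask for, the statement is in fact false. Take $G=\mathbb{Z}$ viewed as a Krasner $(2,2)$-hyperring, $\delta=\delta_0$, $Q=4\mathbb{Z}$ (a $(2,2)$-absorbing hyperideal, hence weakly $(2,2)$-absorbing $\delta_0$-semiprimary by Theorem \ref{3}), $t=n=2$, $s=1$, $a_1=a_2=2$ and $Q_1=3\mathbb{Z}$. Then $k(a_1,a_2,Q_1)=12\mathbb{Z}\subseteq Q$, and no triple $(2,2,q)$ with $q\in Q_1$ is a $\delta_0$-$(2,2)$-zero of $Q$ because $k(2,2)=4\in\delta_0(Q)$; yet $k(2,Q_1)=6\mathbb{Z}\not\subseteq 4\mathbb{Z}$, so no subproduct involving $Q_1$ lands in $\delta(Q)$. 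The repair is exactly the one you sketch: either add the hypothesis that no $k$-product of $(t-1)n-t+2$ of the $a_j$'s alone lies in $\delta(Q)$ (the analogue of the hypothesis $k(x,y)\notin\delta(Q)$ in Theorem \ref{str}, which Theorems \ref{str2} and \ref{str3} likewise secure before invoking the base case), or restrict to $s\ge n$, where the pigeonhole forces every $(t-1)n-t+2$-fold subproduct to meet some $Q_i$. With that hypothesis in place your argument closes along the same lines as the paper's.
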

\begin{proof}
We prove it with induction on $s$. Let us consider $s=1$. In this case we  show  that  $k$-product of $(t-1)n-t+2$ of $a_1,\cdots,\hat{a}_{i_1},\cdots,a_{tn-t+1},Q_1$ including  $Q_1$ is in $\delta(Q)$. Assume that all products of $(t-1)n-t+2$ of $a_1,\cdots,\hat{a}_{i_1},\cdots,a_{tn-t+1},Q_1$  are not in $\delta(Q)$. We consider  $k(a_2^{(t-1)n-t+2},Q_1) \notin \delta(Q)$. Since $(a_2^{tn-t+1},q_1)$ is not a $\delta$-$(t,n)$-zero of $Q$ for all $q_1 \in Q_1$,  then we conclude that $k$-product of the $(t-1)n-t+2$ of $a_i^,$s with $q_1$ is in $\delta(Q)$. By a similar argument given in the proof of Theorem \ref{str}, we have $k(a_3^{tn-t+1},h(a_1,q_1,0^{(m-2)}))=h(k(a_3^{tn-t+1},a_1),k(a_3^{tn-t+1},q_1),0^{(m-2)}) \subseteq \delta(Q)$ which implies $k(a_3^{tn-t+1},a_1) \in h(-k(a_3^{tn-t+1},q_1),0^{(m-1)}) \subseteq \delta(Q)$, a contradiction. This implies  that $k$-product of $(t-1)n-t+2$ of $a_1,\cdots,\hat{a}_{i_1},\cdots,a_{tn-t+1},Q_1$ including  $Q_1$ is in $\delta(Q)$. Now, we suppose that the claim
holds for all positive integers which are less than $s$.  Let  $k(a_1,\cdots,\hat{a}_{i_1},\cdots,\hat{a}_{i_2},\cdots,\hat{a}_{i_s},\cdots,a_{tn-t+1},Q_1^s) \subseteq Q$ but all  products of $(t-1)n-t+2$ of $a_1,\cdots,\hat{a}_{i_1},\cdots,\hat{a}_{i_2},\cdots,\hat{a}_{i_s},\cdots,a_{tn-t+1},Q_1^s$ including at least one of the $Q_i^,$s are not in $\delta(Q)$.  We may assume that $k(a_{s+1}^{tn-t+1},Q_1^s) \notin \delta(Q)$. Note that $ (a_{s+1}^{tn-t+1},q_1^s)$ is not a $\delta$-$(t,n)$-zero of $Q$ for all $q_1^s \in Q$. We get  $ k(a_{s+1}^{tn-t+1},h(a_1,q_1,0^{(m-2)}),\cdots,h(a_s,q_s,0^{(m-2)})) \subseteq \delta(Q)$ by induction hypothesis and Theorem \ref{str2}. Then we conclude that

 $\hspace{0.7cm}k(a_{s+1}^{tn-t+1},h(a_1,q_1,0^{(m-2)}),\cdots,h(a_1,\hat{q}_1,0^{(m-2)})_{i_1},\cdots,h(a_2,\hat{q}_2,0^{(m-2)})_{i_2},$
 
 $\hspace{1cm}\cdots,h(a_{n-1},\hat{q}_{n-1},0^{(m-2)})_{i_{n-1}},\cdots,h(a_s,q_s,0^{(m-2)})) \subseteq \delta(Q)$\\ or 
 
 $\hspace{0.7cm}k(a_{s+1},\cdots,\hat{a}_{i_{s+1}},\cdots,\hat{a}_{i_{s+2}},\cdots,\hat{a}_{i_{s+n-1}},\cdots,a_{tn-t+1},h(a_1,q_1,0^{(m-2)}),$
 
 $\hspace{1cm}\cdots,h(a_s,q_s,0^{(m-2)})) \subseteq \delta(Q)$\\ for some $i \in \{1,\cdots,s\}$. This implies that $k(a_{s+1},\cdots,a_{tn-t+1},\cdots,a_n,\cdots,a_s) \in \delta(Q)$ or $k(a_{s+n},\cdots,a_{tn-t+1},\cdots,a_1^s) \in \delta(Q)$, a contradiction. Then we conclude that $k$-product of $(t-1)n-t+2$ of $a_1,\cdots,\hat{a}_{i_1},\cdots,\hat{a}_{i_2},\cdots,\hat{a}_{i_s},\cdots,a_{tn-t+1},Q_1^s$ including at least one of the $Q_i^,$s is in $\delta(Q)$. 
\end{proof}
\begin{definition}
Suppose that $Q_1^n,Q$ be some proper hyperideals of $G$ such that  $Q$ is a weakly $(t,n)$-absorbing $\delta$-semiprimary hyperideal of $G$ and $k(Q_1^{tn-t+1}) \subseteq Q$.  $Q$ is called  a free $\delta$-$(t,n)$-zero with respect to $k(Q_1^{tn-t+1})$ if $(q_1,\cdots,q_{tn-t+1})$ is not a $\delta$-$(t,n)$-zero of $Q$ for every  $q_i \in Q_i$ with $1 \leq i \leq tn-t+1$.
\end{definition}
\begin{theorem} \label{azad}
Assume that  $Q$ is  a weakly $(t,n)$-absorbing $\delta$-semiprimary hyperideal of $G$ and $0 \neq k(Q_1^{tn-t+1}) \subseteq Q$ for   some hyperideals $Q_1^{tn-t+1}$ of $G$. If  $Q$ is a free $\delta$-$(t,n)$-zero with respect to $k(Q_1^{tn-t+1})$, then   $k$-product of $(t-1)n-t+2$ of the $Q_i$  is a subset of $\delta(Q)$. 
\end{theorem}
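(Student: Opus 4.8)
The plan is to mimic, at the level of general $(t,n)$, the argument that proves Theorem \ref{str3} in the $(2,2)$ case, using Theorem \ref{str4} in place of Theorem \ref{str2}. First I would record a pointwise form of the conclusion. Fix arbitrary representatives $q_i \in Q_i$ for $1 \leq i \leq tn-t+1$. Since $k(Q_1^{tn-t+1}) \subseteq Q$ we have $k(q_1^{tn-t+1}) \in Q$, and by the freeness hypothesis the tuple $(q_1,\ldots,q_{tn-t+1})$ is not a $\delta$-$(t,n)$-zero of $Q$. If $k(q_1^{tn-t+1})=0$, then the very failure of the $\delta$-$(t,n)$-zero condition forces some $k$-product of $(t-1)n-t+2$ of the $q_i$'s into $\delta(Q)$; if $k(q_1^{tn-t+1}) \neq 0$, then $0 \neq k(q_1^{tn-t+1}) \in Q$ and the weakly $(t,n)$-absorbing $\delta$-semiprimary property gives the same conclusion. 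Thus for every choice of representatives, at least one $(t-1)n-t+2$-subproduct lies in $\delta(Q)$.

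Next I would upgrade this to a single fixed $(t-1)n-t+2$-subset of indices working uniformly. Arguing by contradiction, suppose that for no selection of $(t-1)n-t+2$ of the $Q_i$ is the $k$-product contained in $\delta(Q)$. The driver is Theorem \ref{str4}: fixing representatives in the maximal number $n-1$ of slots allowed by its constraint $s \leq (t-1)n-t+2$, and leaving the remaining $(t-1)n-t+2$ slots as hyperideals, the freeness hypothesis ensures that none of the resulting mixed tuples is a $\delta$-$(t,n)$-zero. Hence Theorem \ref{str4} yields a $(t-1)n-t+2$-product, including at least one whole hyperideal $Q_i$, that lies in $\delta(Q)$.

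Because such a product still contains the fixed representatives $a_j$ in the other slots, the final step is to absorb each such representative into its hyperideal, exactly as in the proofs of Theorems \ref{str}--\ref{str4}: if two representatives $a_j, a_j'$ of one slot force two different subproducts into $\delta(Q)$, then applying $k$ to the canonical element $h(a_j,a_j',0^{(m-2)})$ and using the distributivity of $k$ over $h$ gives $k(a_j,\cdots) \in h(-k(a_j',\cdots),0^{(m-1)}) \subseteq \delta(Q)$, merging the two cases and contradicting the assumption that the corresponding pure hyperideal product escapes $\delta(Q)$. Carrying this absorption through all $n-1$ fixed slots turns the product supplied by Theorem \ref{str4} into a genuine $k$-product of $(t-1)n-t+2$ of the $Q_i$ inside $\delta(Q)$, against the standing assumption.

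The step I expect to be the main obstacle is this last promotion from the pointwise statement to a uniform one: a priori the $(t-1)n-t+2$-subset produced by Theorem \ref{str4} depends on the fixed representatives, and the delicate point is to force a single subset to serve all of them through the $h$-combination trick. Keeping careful track, across the $\binom{tn-t+1}{n-1}$ candidate subsets, of which slots are elements and which are hyperideals is the principal bookkeeping burden of the argument.
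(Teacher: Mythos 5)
Your proposal takes essentially the same approach as the paper: the paper's entire proof of Theorem \ref{azad} is the remark that it follows from Theorem \ref{str4} in the same way that Theorem \ref{str3} follows from Theorem \ref{str2}, and your plan (verify via freeness that the mixed tuples are not $\delta$-$(t,n)$-zeros, apply Theorem \ref{str4}, then absorb the remaining fixed representatives into their hyperideals through the $h(a_j,a_j',0^{(m-2)})$ distributivity trick) is exactly that argument written out in detail.
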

\begin{proof}
This can be proved by Theorem \ref{str4}, in a very similar manner to the way in which Theorem \ref{str3} was proved.
\end{proof}
\begin{theorem}\label{6}
Let  $G$ be a commutative Krasner $(m,2)$-hyperring and $Q$ be a weakly $(2,2)$-absorbing $\delta$-semiprimary hyperideal of $G$. If $(x,y,z)$ is  an $\delta$-$(2,2)$-zero of $Q$ for some $x,y,z \in G$, then 
\begin{itemize} 
\item[\rm(i)]~$k(x,y,Q)=k(y,z,Q)=k(x,z,Q)=0$
\item[\rm(ii)]~$k(x,Q^{(2)})=k(y,Q^{(2)})=k(z,Q^{(2)})=0$
\end{itemize}
\end{theorem}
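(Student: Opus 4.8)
The plan is to prove (i) first and then bootstrap to (ii), exploiting the symmetry of the $\delta$-$(2,2)$-zero condition under permuting $x,y,z$ together with the ``absorption trick'' of perturbing one coordinate by an element of $Q$. Throughout I would use that $\delta(Q)$ is a hyperideal containing $Q$, so any $k$-product carrying a factor from $Q$ lies in $\delta(Q)$, and that $\delta(Q)$ is an $m$-ary subhypergroup closed under $h$ and under negatives.

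For (i), to show $k(x,y,Q)=0$, fix $q\in Q$ and suppose for contradiction that $k(x,y,q)\neq 0$. Pick any $w\in h(z,q,0^{(m-2)})$. By distributivity $k(x,y,w)\in h(k(x,y,z),k(x,y,q),0^{(m-2)})$; since $k(x,y,z)=0$ and $0$ is the identity of the canonical hypergroup, this forces $k(x,y,w)=k(x,y,q)$, so $0\neq k(x,y,w)\in Q$. As $Q$ is weakly $(2,2)$-absorbing $\delta$-semiprimary, one of $k(x,y)$, $k(y,w)$, $k(x,w)$ lies in $\delta(Q)$. The first is excluded by hypothesis. For the second I would expand $k(y,w)\in h(k(y,z),k(y,q),0^{(m-2)})$ with $k(y,q)\in Q\subseteq\delta(Q)$, so $k(y,w)\in\delta(Q)$ would give $k(y,z)\in h(k(y,w),-k(y,q),0^{(m-2)})\subseteq\delta(Q)$, contradicting the zero hypothesis; likewise $k(x,w)\in\delta(Q)$ forces $k(x,z)\in\delta(Q)$, again a contradiction. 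Hence $k(x,y,q)=0$, and the remaining two equalities follow by permuting $x,y,z$.

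For (ii), to show $k(x,Q^{(2)})=0$, fix $q_1,q_2\in Q$ and suppose $k(x,q_1,q_2)\neq 0$. Choose $w_1\in h(y,q_1,0^{(m-2)})$ and $w_2\in h(z,q_2,0^{(m-2)})$ and expand $k(x,w_1,w_2)$ by distributivity; up to terms with a zero factor it is an $h$-combination of $k(x,y,z)$, $k(x,y,q_2)$, $k(x,z,q_1)$ and $k(x,q_1,q_2)$. The first vanishes by hypothesis and the two middle terms vanish by part (i), since they lie in $k(x,y,Q)$ and $k(x,z,Q)$ respectively, so $k(x,w_1,w_2)=k(x,q_1,q_2)$ and thus $0\neq k(x,w_1,w_2)\in Q$. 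Applying weak absorption to $(x,w_1,w_2)$ puts one of $k(x,w_1)$, $k(x,w_2)$, $k(w_1,w_2)$ in $\delta(Q)$; expanding each and discarding cross terms with a factor from $Q$, the first two force $k(x,y)$ or $k(x,z)$ into $\delta(Q)$, while the third forces $k(y,z)$ into $\delta(Q)$, each contradicting that $(x,y,z)$ is a $\delta$-$(2,2)$-zero. Hence $k(x,q_1,q_2)=0$, and $k(y,Q^{(2)})=k(z,Q^{(2)})=0$ follow by symmetry.

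The main obstacle I anticipate is purely bookkeeping. I must justify the ``reversal'' step, where from $a\in h(b,c,0^{(m-2)})$ with $a,c\in\delta(Q)$ one concludes $b\in\delta(Q)$, which rests on the canonical hypergroup yielding $b\in h(a,-c,0^{(m-2)})$ and on $\delta(Q)$ being closed under $h$ and negatives. Tracking which cross terms land automatically in $\delta(Q)$ because they carry a factor from $Q$, and confirming that $0$ acts as a strict identity so the perturbed product equals the original, are the only delicate points; no idea beyond the technique already used in Theorem \ref{str} should be required.
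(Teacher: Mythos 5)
Your proposal is correct and follows essentially the same route as the paper's proof: perturb the coordinates of the $\delta$-$(2,2)$-zero by elements of $Q$, expand by distributivity, use $k(x,y,z)=0$ (and, in part (ii), part (i)) to reduce the perturbed product to the original nonzero one, apply the weakly $(2,2)$-absorbing $\delta$-semiprimary hypothesis, and derive a contradiction via the reversibility of the canonical hyperoperation together with $Q\subseteq\delta(Q)$. The only cosmetic difference is that you work with a chosen element $w$ of each hypersum while the paper argues with the whole sets.
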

\begin{proof}
(i) Let $Q$ be a weakly $(2,2)$-absorbing $\delta$-semiprimary hyperideal of $G$ and $(x,y,z)$ be  an $\delta$-$(2,2)$-zero of $Q$. Let us assume that $k(x,y,Q) \neq 0$. This means that $k(x,y,q) \neq 0$ for some $q \in Q$. So we have $0 \neq k(x,h(z,q,0^{(m-2)}),y)=h(k(x,z,y),k(x,q,y),0^{(m-2)}) \subseteq Q$. Since $Q$ is  weakly $(2,2)$-absorbing $\delta$-semiprimary and $k(x,y) \notin \delta(Q)$, we get $k(x,h(z,q,0^{(m-2)}))=h(k(x,z),k(x,q),0^{(m-2)}) \subseteq \delta(Q)$ or $k(h(z,q,0^{(m-2)}),y)=h(k(z,y),k(q,y),0^{(m-2)}) \subseteq \delta(Q)$. In the first case, we have $k(x,z) \in h(-k(x,q),0^{(m-1)}) \subseteq \delta(Q)$ which is a contradiction. The second case leads to a contradiction because $ k(z,y) \in h(-k(q,y),0^{(m-1)}) \subseteq \delta(Q)$. Thus $k(x,y,Q)=0$. Similiar for the other cases. 

(ii) Let $k(x,Q^{(2)}) \neq 0$. This implies that $k(x,q_1^2) \neq 0$ for some $q_1, q_2 \in Q$. Therefore 

$\hspace{2.7cm}0 \neq k(x,h(y,q_1,0^{(m-2)}),h(z,q_2,0^{(m-2)}))$

$\hspace{3cm}=h(k(x,y,z),k(x,y,q_2),k(x,q_1,z),k(x,q_1^2),0^{(m-4)})$

$\hspace{3cm} \subseteq Q$. \\
Since $Q$ is a weakly $(2,2)$-absorbing $\delta$-semiprimary hyperideal of $G$, we obtain the following cases:
\begin{itemize} 
\item[\rm Case 1.]~ $k(x,h(y,q_1,0^{(m-2)})) \subseteq \delta(Q)$ which implies $h(k(x,y),k(x,q_1),0^{(m-2)}) \subseteq \delta(Q)$. Then we have 
$k(x,y) \in h(-k(x,q_1),0^{(m-1)}) \subseteq \delta(Q)$, a contradiction.
\item[\rm Case 2.]~ $k(x,h(z,q_2,0^{(m-2)})) \subseteq \delta(Q)$ which means $h(k(x,z),k(x,q_2),0^{(m-2)}) \subseteq \delta(Q)$. This follows that $k(x,z) \in h(-k(x,q_2),0^{(m-1)}) \subseteq \delta(Q)$, a contradiction.
\item[\rm Case 3.]~ $k(h(y,q_1,0^{(m-2)}),h(z,q_2,0^{(m-2)})) \subseteq \delta(Q)$ and so 

$h(k(y,z),k(q_1,z),k(y,q_2),k(q_1^2),0^{(m-4)}) \subseteq \delta(Q)$.\\ This implies that $k(y,z) \in h(-k(q_1,z),-k(y,q_2),-k(q_1^2), 0^{(m-2)}) \subseteq \delta(Q)$ which is a contradiction. 
\end{itemize}
Therefore $k(x,Q^{(2)}) = 0$. Similiar for the other cases.
\end{proof}
\begin{theorem}\label{7}
Let  $G$ be a commutative Krasner $(m,2)$-hyperring and $Q$ be a weakly $(2,2)$-absorbing $\delta$-semiprimary hyperideal of $G$ but is not $(2,2)$-absorbing $\delta$-semiprimary. Then $k(Q^{(3)})=0$.
\end{theorem}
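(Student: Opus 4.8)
The plan is to argue by contradiction, first distilling the failure of the $(2,2)$-absorbing $\delta$-semiprimary property into a single $\delta$-$(2,2)$-zero and then feeding Theorem~\ref{6} into a distributivity computation. First I would note that since $Q$ is not $(2,2)$-absorbing $\delta$-semiprimary there exist $x,y,z\in G$ with $k(x,y,z)\in Q$ while $k(x,y)\notin\delta(Q)$, $k(y,z)\notin\delta(Q)$ and $k(x,z)\notin\delta(Q)$. Because $Q$ is weakly $(2,2)$-absorbing $\delta$-semiprimary, if $k(x,y,z)$ were nonzero one of these three products would lie in $\delta(Q)$; hence $k(x,y,z)=0$ and $(x,y,z)$ is a $\delta$-$(2,2)$-zero of $Q$. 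Theorem~\ref{6} then applies and yields $k(x,y,Q)=k(y,z,Q)=k(x,z,Q)=0$ together with $k(x,Q^{(2)})=k(y,Q^{(2)})=k(z,Q^{(2)})=0$.

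Next I would assume toward a contradiction that $k(Q^{(3)})\neq 0$, so that $k(q_1,q_2,q_3)\neq 0$ for some $q_1,q_2,q_3\in Q$, and consider the product $k(h(x,q_1,0^{(m-2)}),h(y,q_2,0^{(m-2)}),h(z,q_3,0^{(m-2)}))$. Expanding by distributivity of $k$ over $h$ produces the hypersum of the eight triple products obtained by selecting $x$ or $q_1$ from the first slot, $y$ or $q_2$ from the second, and $z$ or $q_3$ from the third. Using $k(x,y,z)=0$, the three identities $k(x,y,Q)=k(x,z,Q)=k(y,z,Q)=0$ for the terms containing exactly one $q_i$, and the three identities $k(x,Q^{(2)})=k(y,Q^{(2)})=k(z,Q^{(2)})=0$ (after normalising the order of arguments by commutativity) for the terms containing exactly two $q_i$, every summand vanishes except $k(q_1,q_2,q_3)$. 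Since $h$ absorbs zeros, the whole product collapses to $k(q_1,q_2,q_3)$, giving $0\neq k(h(x,q_1,0^{(m-2)}),h(y,q_2,0^{(m-2)}),h(z,q_3,0^{(m-2)}))\subseteq Q$.

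Then I would invoke the weakly $(2,2)$-absorbing $\delta$-semiprimary hypothesis on this nonzero product, obtaining one of $k(h(x,q_1,0^{(m-2)}),h(y,q_2,0^{(m-2)}))\subseteq\delta(Q)$, $k(h(y,q_2,0^{(m-2)}),h(z,q_3,0^{(m-2)}))\subseteq\delta(Q)$, or $k(h(x,q_1,0^{(m-2)}),h(z,q_3,0^{(m-2)}))\subseteq\delta(Q)$. Consider the first: distributing gives $h(k(x,y),k(x,q_2),k(q_1,y),k(q_1,q_2),0^{(m-4)})\subseteq\delta(Q)$, and since $k(x,q_2)$, $k(q_1,y)$, $k(q_1,q_2)$ all lie in $Q\subseteq\delta(Q)$ (as $Q$ is a hyperideal), reversibility in the canonical hypergroup forces $k(x,y)\in\delta(Q)$. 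This contradicts $(x,y,z)$ being a $\delta$-$(2,2)$-zero; the other two cases symmetrically contradict $k(y,z)\notin\delta(Q)$ and $k(x,z)\notin\delta(Q)$. Hence $k(Q^{(3)})=0$.

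The step I expect to be the main obstacle is the bookkeeping of the distributive expansion together with the final extraction. One must match the eight-term expansion carefully against the six vanishing identities of Theorem~\ref{6}, using commutativity to rearrange arguments into the canonical forms $k(\cdot,\cdot,Q)$ and $k(\cdot,Q^{(2)})$, and then justify passing from $k(A,B)\subseteq\delta(Q)$ to $k(x,y)\in\delta(Q)$. The latter rests on $\delta(Q)$ being closed under $h$ and under negation, together with the reversibility property $c\in h(a_1,\dots,a_r)\Rightarrow a_1\in h(c,-a_2,\dots,-a_r)$ valid in the canonical $m$-ary hypergroup. Once the first case is written out in full, the remaining two are routine by symmetry.
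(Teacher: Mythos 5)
Your proposal is correct and follows essentially the same route as the paper: extract a $\delta$-$(2,2)$-zero $(x,y,z)$ from the failure of the $(2,2)$-absorbing $\delta$-semiprimary property, apply Theorem~\ref{6} to kill all cross-terms in the distributive expansion of $k(h(x,q_1,0^{(m-2)}),h(y,q_2,0^{(m-2)}),h(z,q_3,0^{(m-2)}))$, and then derive a contradiction from the weakly absorbing hypothesis via reversibility in the canonical hypergroup. If anything, your write-up is more careful than the paper's at the two delicate points (the eight-term bookkeeping and the extraction of $k(x,y)\in\delta(Q)$ from the hypersum), but the argument is the same.
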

\begin{proof}
Let $Q$ be  a weakly $(2,2)$-absorbing $\delta$-semiprimary hyperideal of $G$ but is not $(2,2)$-absorbing $\delta$-semiprimary. This implies that we have  an $\delta$-$(2,2)$-zero of $Q$ for some $x,y,z \in G$. Let us assume that $k(Q^{(3)}) \neq 0$. Then  $k(q_1^3) \neq 0$ for some $q_1^3 \in Q$. Therefore we have

 $k(h(x,q_1,0^{(m-2)}),h(y,q_2,0^{(m-2)}),h(z,q_3,0^{(m-2)}))$
 
$\hspace{0.2cm}=h(h(h(k(x,y,z),k(q_1,y,z),0^{(m-2)}),h(k(x,y,q_3),k(q_1,y,q_3),0^{(m-2)})),$

$\hspace{0.8cm}h(h(k(q_2,x,z),h(q_1^2,z),0^{(m-2)})),h(h(k(q_1^3),k(x,q_2^3)),0).$\\
From $k(q_1^3) \neq 0$, it follows that 

$\hspace{1cm}0 \neq k(h(x,q_1,0^{(m-2)}),h(y,q_2,0^{(m-2)}),h(z,q_3,0^{(m-2)})) \subseteq Q$ \\
by Theorem \ref{6}. Since $Q$ is   weakly $(2,2)$-absorbing $\delta$-semiprimary, we have  $k(h(x,q_1,0^{(m-2)}),h(y,q_2,0^{(m-2)})) \subseteq \delta(Q)$ or $k(h(x,q_1,0^{(m-2)}),h(z,q_3,0^{(m-2)})) \subseteq \delta(Q)$ or $k(h(x,q_1,0^{(m-2)}),h(z,q_3,0^{(m-2)})) \subseteq \delta(Q)$. In the first possibilty, we obtain $h(k(x,y),k(x,q_2),k(q_1,y),k(q_1^2),0^{(m-4)}) \subseteq \delta(Q)$ which means $k(x,y) \in h(-k(x,q_2),-k(q_1,y),-k(q_1^2),0^{(m-3)}) \subseteq \delta(Q)$ which is a contradiction. Moreover, the other possibilities lead to a contradiction. Thus $k(Q^{(3)})=0$.
\end{proof}
\begin{definition}
Assume that $Q$ is a weakly $(k,n)$-absorbing $\delta$-semiprimary hyperideal of $G$. Then $(a_1,\cdots,a_{tn-t+1})$ is called $\delta$-$(t,n)$-zero of $Q$ if $k(a_1^{tn-t+1})=0$ and none $k$-product of the terms $(t-1)n-t+2$ of $a_i^,$s is in $\delta(Q)$.
\end{definition}
\begin{theorem}\label{8}
If $Q$ is a weakly $(t,n)$-absorbing $\delta$-semiprimary hyperideal of $G$ and $(a_1,\cdots,a_{tn-t+1})$ is a $\delta$-$(t,n)$-zero of $Q$, then  for $1 \leq i_1,\cdots,i_s \leq tn-t+1$ and $1 \leq s \leq (t-1)n-t+2$,
\[k(a_1,\cdots,\hat{a}_{i_1},\cdots,\hat{a}_{i_2},\cdots,\hat{a}_{i_s},\cdots,a_{tn-t+1},Q^{(s)})=0.\]
\end{theorem}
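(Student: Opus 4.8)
The plan is to argue by strong induction on $s$, in the spirit of the proof of Theorem~\ref{6}, of which the present statement is the general $(t,n)$-version (its parts (i) and (ii) being exactly the cases $s=1$ and $s=2$ when $t=n=2$). Since $k$ is commutative, the particular choice of deleted indices $i_1,\dots,i_s$ is immaterial, so it suffices to prove that for all $q_1,\dots,q_s\in Q$ the single element obtained from $k(a_1^{tn-t+1})$ by replacing each $a_{i_j}$ with $q_j$ equals $0$; letting the $q_j$ range over $Q$ then yields the stated identity with $Q^{(s)}$.

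Assume, toward a contradiction, that for some $q_1,\dots,q_s\in Q$ this replaced product is nonzero. First I would form
\[
P=k\bigl(a_1,\dots,h(a_{i_1},q_1,0^{(m-2)}),\dots,h(a_{i_s},q_s,0^{(m-2)}),\dots,a_{tn-t+1}\bigr),
\]
with $h(a_{i_j},q_j,0^{(m-2)})$ in position $i_j$, and expand it by distributivity of $k$ over $h$. This produces a hypersum of $2^s$ terms, one for each way of choosing $a_{i_j}$ or $q_j$ at the $s$ altered slots. The term choosing every $a_{i_j}$ is $k(a_1^{tn-t+1})=0$, since $(a_1,\dots,a_{tn-t+1})$ is a $\delta$-$(t,n)$-zero. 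Each mixed term chooses between $1$ and $s-1$ of the $q_j$'s, hence is a product of the form governed by the induction hypothesis for a strictly smaller number of replacements, and so equals $0$. Thus $P$ collapses to the term choosing all of $q_1,\dots,q_s$, giving $0\neq P\subseteq Q$, the inclusion holding because $Q$ is a hyperideal and that surviving term already contains factors from $Q$. (For $s=1$ no mixed terms occur, so this is the base case.)

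Next I would invoke that $Q$ is weakly $(t,n)$-absorbing $\delta$-semiprimary, applied to $0\neq P\subseteq Q$ exactly as the set-valued arguments are treated in Theorems~\ref{6} and~\ref{str4}: some $k$-product of $(t-1)n-t+2$ of the arguments of $P$ lies in $\delta(Q)$. If this sub-product uses none of the altered slots it is a $k$-product of $(t-1)n-t+2$ of the $a_i$'s, contradicting the $\delta$-$(t,n)$-zero hypothesis directly. Otherwise, expanding the chosen sub-product by distributivity once more separates off a single all-$a$ term, itself a $k$-product of $(t-1)n-t+2$ of the $a_i$'s, from the remaining terms, each of which carries at least one factor $q_j\in Q$ and hence lies in $Q\subseteq\delta(Q)$. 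As the whole sub-product lies in $\delta(Q)$, reversibility of the canonical $m$-ary hypergroup $(G,h)$ lets me subtract the $\delta(Q)$-terms and deduce that the all-$a$ term itself lies in $\delta(Q)$, again contradicting the $\delta$-$(t,n)$-zero hypothesis. Both alternatives being impossible, the replaced product must vanish, which completes the induction.

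The principal difficulty I foresee is bookkeeping rather than conceptual: one must keep careful track of the $2^s$ (respectively $2^r$) terms produced by distributing $k$ over the padded sums $h(a_{i_j},q_j,0^{(m-2)})$, verify that every surviving mixed term involves strictly fewer than $s$ replacements so that the strong induction hypothesis genuinely applies, and, whenever the number of terms exceeds $m$, iterate $h$ so that all applications of distributivity and of reversibility stay within the $m$-ary structure. The only steps beyond routine manipulation are the two appeals to reversibility that pull an all-$a$ product back into $\delta(Q)$, together with the convention---already used in Theorems~\ref{6} and~\ref{str4}---of applying the weak absorbing property directly to arguments that are themselves hypersums.
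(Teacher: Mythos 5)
Your proposal is correct and follows essentially the same route as the paper: induction on $s$, substituting $h(a_{i_j},q_j,0^{(m-2)})$ for the altered slots, distributing $k$ over $h$ and using the induction hypothesis together with the $\delta$-$(t,n)$-zero hypothesis to kill all cross terms, then applying the weakly $(t,n)$-absorbing $\delta$-semiprimary property to the resulting nonzero product in $Q$ and using reversibility of $(G,h)$ to force an all-$a$ sub-product into $\delta(Q)$, a contradiction. Your explicit bookkeeping of the $2^s$ terms and the observation that strong induction is needed for the mixed terms only make precise what the paper's argument does implicitly.
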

\begin{proof}
We use the induction on $s$. Assume that $s=1$. Let us suppose that $k(a_1,\cdots,\hat{a}_{i_1},\cdots,a_{tn-t+1},Q) \neq 0$. We may assume that $k(a_2^{tn-t+1},Q) \neq 0$. Therefore $k(a_2^{tn-t+1},q) \neq 0$ for some $q \in Q$. Hence every $k$-product of the $(t-1)n-t+2$ of $a_i^,$s including $q$ is in $\delta(Q)$. By the same argument given in Theorem \ref{6}, we have $k(a_3^{tn-t+1},h(a_1,q,0^{(m-2)}))=h(k(a_3^{tn-t+1},a_1),k(a_3^{tn-t+1},q),0^{(m-2)}) \subseteq \delta(Q)$ which implies $k(a_3^{tn-t+1},a_1) \in h(-k(a_3^{tn-t+1},q),0^{(m-1)}) \subseteq \delta(Q)$, a contradiction. This means that $k(a_1,\cdots,\hat{a}_{i_1},\cdots,a_{tn-t+1},Q) = 0$. Now, let us suppose that $k(a_1,\cdots,\hat{a}_{i_1},\cdots,\hat{a}_{i_2},\cdots,\hat{a}_{i_s},\cdots,a_{tn-t+1},Q^{(s)}) \neq 0$. We may assume that $k(a_{s+1}^{tn-t+1},Q^{(s)}) \neq 0$. Hence $0 \neq k(a_{s+1}^{tn-t+1},q_1^s) \in Q$ for some $q_1^s \in Q$. It follows that  $0 \neq k(a_{s+1}^{tn-t+1},h(a_1,q_1,0^{(m-2)}),\cdots,h(a_s,q_s,0^{(m-2)})) \subseteq Q$ by Theorem \ref{6} and induction hypothesis. Then we conclude that

 $\hspace{0.7cm}k(a_{s+1}^{tn-t+1},h(a_1,q_1,0^{(m-2)}),\cdots,h(a_1,\hat{q}_1,0^{(m-2)})_{i_1},\cdots,h(a_2,\hat{q}_2,0^{(m-2)})_{i_2},$
 
 $\hspace{1cm}\cdots,h(a_{n-1},\hat{q}_{n-1},0^{(m-2)})_{i_{n-1}},\cdots,h(a_s,q_s,0^{(m-2)})) \subseteq \delta(Q)$\\ or 
 
 $\hspace{0.7cm}k(a_{s+1},\cdots,\hat{a}_{i_{s+1}},\cdots,\hat{a}_{i_{s+2}},\cdots,\hat{a}_{i_{s+n-1}},\cdots,a_{tn-t+1},h(a_1,q_1,0^{(m-2)}),$
 
 $\hspace{1cm}\cdots,h(a_s,q_s,0^{(m-2)})) \subseteq \delta(Q)$\\ for some $i \in \{1,\cdots,s\}$. This implies that $k(a_{s+1},\cdots,a_{tn-t+1},\cdots,a_n,\cdots,a_s) \in \delta(Q)$ or $k(a_{s+n},\cdots,a_{tn-t+1},\cdots,a_1^s) \in \delta(Q)$, a contradiction. Thus we conclude that  $k(a_1,\cdots,\hat{a}_{i_1},\cdots,\hat{a}_{i_2},\cdots,\hat{a}_{i_s},\cdots,a_{tn-t+1},Q^{(s)})=0$.
\end{proof}
\begin{theorem} \label{9}
Let $Q$ be a weakly $(t,n)$-absorbing $\delta$-semiprimary hyperideal of $G$ but is not $(t,n)$-absorbing $\delta$-semiprimary. Then $k(Q^{(tn-t+1)})=0$.
\end{theorem}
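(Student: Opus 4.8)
The plan is to lift the proof of Theorem \ref{7} from the $(2,2)$ setting to the full $(t,n)$ setting, with Theorem \ref{8} playing the role that Theorem \ref{6} played there. First I would extract the usable consequence of the hypothesis: since $Q$ is weakly $(t,n)$-absorbing $\delta$-semiprimary but not $(t,n)$-absorbing $\delta$-semiprimary, there exist $a_1^{tn-t+1} \in G$ with $k(a_1^{tn-t+1}) \in Q$ such that no $k$-product of $(t-1)n-t+2$ of the $a_i^,$s lies in $\delta(Q)$. If $k(a_1^{tn-t+1})$ were nonzero we would have $0 \neq k(a_1^{tn-t+1}) \in Q$, and the weak hypothesis would already force some such sub-product into $\delta(Q)$; hence $k(a_1^{tn-t+1})=0$, i.e. $(a_1,\cdots,a_{tn-t+1})$ is a $\delta$-$(t,n)$-zero of $Q$.

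Next, arguing by contradiction, I would assume $k(Q^{(tn-t+1)}) \neq 0$ and fix $q_1^{tn-t+1} \in Q$ with $k(q_1^{tn-t+1}) \neq 0$. Form the perturbed factors $b_i = h(a_i,q_i,0^{(m-2)})$ and expand $k(b_1,\cdots,b_{tn-t+1})$ using distributivity of $k$ over $h$. The result is a hypersum over $h$ of products $k(x_1,\cdots,x_{tn-t+1})$ with each $x_i \in \{a_i,q_i,0\}$: the pure-$a$ term is $k(a_1^{tn-t+1})=0$, the pure-$q$ term is $k(q_1^{tn-t+1}) \neq 0$, every term containing a $0$ vanishes, and the central claim is that each mixed term (containing at least one $a_i$ and at least one $q_j$) vanishes by Theorem \ref{8}, since it is a $k$-product of some of the $a_i^,$s together with factors drawn from $Q$. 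Granting this, $0 \neq k(b_1,\cdots,b_{tn-t+1}) \subseteq Q$, the containment holding because each $q_i \in Q$ and $Q$ is a hyperideal.

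Applying the weak hypothesis to this nonzero subset of $Q$, there are $(t-1)n-t+2$ indices $i_1,\cdots,i_{(t-1)n-t+2}$ whose $b_i^,$s have $k$-product inside $\delta(Q)$. Expanding that sub-product the same way, every summand except the pure-$a$ one carries a factor $q_j \in Q \subseteq \delta(Q)$ and so lies in $\delta(Q)$; reversibility in the canonical $m$-ary hypergroup — precisely the maneuver $k(\cdots) \in h(-k(\cdots),0^{(m-1)}) \subseteq \delta(Q)$ used throughout Theorems \ref{str}--\ref{7} — then forces $k(a_{i_1},\cdots,a_{i_{(t-1)n-t+2}}) \in \delta(Q)$. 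This contradicts $(a_1,\cdots,a_{tn-t+1})$ being a $\delta$-$(t,n)$-zero, and the contradiction yields $k(Q^{(tn-t+1)})=0$.

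The step I expect to be the main obstacle is the central claim that all mixed terms vanish. Theorem \ref{8} kills a mixed term only when it retains at least $n-1$ of the $a_i^,$s, i.e. draws at most $(t-1)n-t+2$ factors from $Q$; the terms retaining fewer than $n-1$ of the $a_i^,$s correspond to $s>(t-1)n-t+2$, outside the stated range $1 \leq s \leq (t-1)n-t+2$ of Theorem \ref{8}, and are not immediately covered. Closing this gap — either by strengthening Theorem \ref{8} via a downward induction that peels off one retained $a_i$ at a time, or by a separate vanishing argument for these low-$a$ terms — together with the careful $n$-ary hypersum bookkeeping, is where the real work lies; note that for $n=2$ the gap is empty, which is exactly why Theorem \ref{7} goes through verbatim.
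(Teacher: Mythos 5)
Your strategy is exactly the paper's: its proof of Theorem \ref{9} consists of producing a $\delta$-$(t,n)$-zero $(a_1,\cdots,a_{tn-t+1})$ and then asserting that the claim ``follows by using Theorem \ref{8}, in a very similar manner to the way in which Theorem \ref{7} was proved'' --- i.e., precisely the perturbation $b_i=h(a_i,q_i,0^{(m-2)})$ and the distributive expansion you describe. On the steps you actually carry out (extracting the $\delta$-$(t,n)$-zero, the containment in $Q$, and the final reversibility maneuver that isolates the pure-$a$ sub-product inside $\delta(Q)$), you and the paper agree.

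The obstacle you flag at the end is, however, a genuine gap, and the paper does not close it either. After expanding $k(b_1,\cdots,b_{tn-t+1})$, the mixed terms are $k$-products of $tn-t+1-s$ of the $a_i$'s together with $s$ elements of $Q$, where $1\le s\le tn-t$; Theorem \ref{8} yields vanishing only for $1\le s\le (t-1)n-t+2$. The leftover range $(t-1)n-t+3\le s\le tn-t$ is nonempty exactly when $n\ge 3$, so for every $n\ge 3$ there are mixed terms whose vanishing is not established, and without it one cannot conclude $0\neq k(b_1,\cdots,b_{tn-t+1})$, which is the hinge of the whole argument. Moreover, the obvious repair --- extending Theorem \ref{8} to all $s\le tn-t$ by the same induction --- meets a structural problem: a term with $s\ge (t-1)n-t+3$ factors in $Q$ admits a sub-product of length $(t-1)n-t+2$ drawn entirely from $Q$, which lies in $Q\subseteq\delta(Q)$ automatically, so the weakly $(t,n)$-absorbing hypothesis is satisfied vacuously and produces no contradiction from which vanishing could be extracted. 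A genuinely different argument (or a reorganization of the expansion that avoids these low-$a$ terms) is needed for $n\ge 3$; your observation that the gap is empty for $n=2$ is correct, and that is the only case the model proof (Theorem \ref{7}) actually covers.
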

\begin{proof}
Assume that  $Q$ is a weakly $(t,n)$-absorbing $\delta$-semiprimary hyperideal of $G$ but is not $(t,n)$-absorbing $\delta$-semiprimary. Then there exists a $\delta$-$(t,n)$-zero $(a_1,\cdots,a_{tn-t+1})$ of $Q$. Now, the claim follows by using Theorem \ref{8}, in a very similar manner to the way in which Theorem \ref{7} was proved.
\end{proof}
As an instant consequence of the previous theorem, we have the following explicit results.

\begin{corollary}
Let $Q$ be a weakly $(t,n)$-absorbing $\delta$-semiprimary hyperideal of $G$ but is not $(t,n)$-absorbing $\delta$-semiprimary. Then $Q \subseteq rad(0)$.
\end{corollary}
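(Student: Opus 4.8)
The plan is to read off the conclusion directly from Theorem \ref{9}. Since $Q$ is weakly $(t,n)$-absorbing $\delta$-semiprimary but not $(t,n)$-absorbing $\delta$-semiprimary, that theorem already gives $k(Q^{(tn-t+1)})=0$. The entire task is then to translate this nilpotency statement into membership in $rad(0)$, so no new structural work about $Q$ is required.

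First I would fix an arbitrary $q\in Q$ and specialize all $tn-t+1$ entries of the product to $q$. As $k(Q^{(tn-t+1)})=0$ holds for every choice of entries from $Q$, in particular $k(q^{(tn-t+1)})=0$. The key numerical observation is $tn-t+1=t(n-1)+1$, so this iterated $n$-ary product is exactly $k_{(t)}(q^{(t(n-1)+1)})$; that is, it has the shape $k_{(l)}(q^{(s)})$ with $l=t$ and $s=l(n-1)+1$ that occurs in the radical description (Theorem 4.23 of \cite{sorc1}).

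Next I would invoke that radical characterization. Because $k_{(t)}(q^{(t(n-1)+1)})=0$ lies in the zero hyperideal, $q$ satisfies the defining condition for membership in $rad(0)$, whence $q\in rad(0)$. If one prefers to argue straight from the definition $rad(0)=\bigcap P$ taken over all $n$-ary prime hyperideals $P$: for each such $P$ we have $k_{(t)}(q^{(t(n-1)+1)})=0\in P$, and iterating the $n$-ary prime property (Lemma 4.5 of \cite{sorc1}) forces $q\in P$; since this holds for every $n$-ary prime $P$, we conclude $q\in rad(0)$, and if no $n$-ary prime contains $(0)$ then $rad(0)=G$ and the inclusion is automatic. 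As $q\in Q$ was arbitrary, $Q\subseteq rad(0)$.

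The only delicate point, and the step I would check most carefully, is the exponent bookkeeping: one must confirm that $tn-t+1$ is genuinely of the form $l(n-1)+1$ so that the second clause of the radical criterion applies with $l=t$ and that the symbol $k(Q^{(tn-t+1)})$ is read as the $t$-fold iterated product $k_{(t)}$ rather than a single $n$-ary application. Once this identification is made, the result is immediate and no further computation is needed.
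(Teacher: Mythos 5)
Your proposal is correct and matches the paper's intent: the paper states this corollary as an immediate consequence of Theorem \ref{9}, and your argument (specialize $k(Q^{(tn-t+1)})=0$ to $k_{(t)}(q^{(t(n-1)+1)})=0$ for each $q\in Q$ and conclude $q\in rad(0)$) is exactly the omitted verification, with the exponent identity $tn-t+1=t(n-1)+1$ checked correctly. One small remark: Theorem 4.23 of \cite{sorc1} as quoted gives only the necessary condition for radical membership, so your second, direct argument---intersecting over all $n$-ary prime hyperideals and iterating Lemma 4.5---is the one that actually completes the proof; it is good that you included it.
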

\begin{corollary}
Assume that the commutative Krasner $(m,n)$-hyperring $G$  has no non-zero nilpotent elements. If $Q$ is a weakly $(t,n)$-absorbing $\delta$-semiprimary hyperideal of $G$, then $Q$ is an $(t,n)$-absorbing $\delta$-semiprimary hyperideal of $G$.
\end{corollary}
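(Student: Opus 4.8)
The plan is to prove the contrapositive and reduce everything to Theorem \ref{9}. Suppose, toward a contradiction, that $Q$ is a weakly $(t,n)$-absorbing $\delta$-semiprimary hyperideal of $G$ that fails to be $(t,n)$-absorbing $\delta$-semiprimary. Then the hypotheses of Theorem \ref{9} are met, so $k(Q^{(tn-t+1)})=0$; this single fact, together with the standing assumption that $G$ is reduced, is what I intend to exploit. (Equivalently, I could quote the corollary immediately preceding the statement, which already packages this as $Q\subseteq rad(0)$.)

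First I would read off $k(Q^{(tn-t+1)})=0$ element by element. Fixing an arbitrary $q\in Q$ and specializing all $tn-t+1$ arguments to $q$ gives $k(q^{(tn-t+1)})=0$. Since $tn-t+1=t(n-1)+1$, this is exactly a vanishing $t$-fold iterated power $k_{(t)}(q^{(tn-t+1)})=0$, which by the radical characterization recalled in the introduction (Theorem 4.23 of \cite{sorc1}) places $q$ in $rad(0)$. As $q\in Q$ was arbitrary, $Q\subseteq rad(0)$. Next I would invoke reducedness: because $G$ has no non-zero nilpotent elements, $rad(0)=\{0\}$, so the inclusion $Q\subseteq rad(0)$ collapses to $Q=\{0\}$.

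It then remains to contradict the assumption that $Q$ is not $(t,n)$-absorbing $\delta$-semiprimary, and this closing step is where I expect the real obstacle to lie: one must confirm that the zero hyperideal $\{0\}$ is itself $(t,n)$-absorbing $\delta$-semiprimary, so that $Q=\{0\}$ is incompatible with $Q$ being weakly but not genuinely $(t,n)$-absorbing $\delta$-semiprimary. Concretely, whenever $k(a_1^{tn-t+1})\in\{0\}$ one needs a $k$-product of $(t-1)n-t+2$ of the $a_i$ lying in $\delta(\{0\})$, and verifying this—using $\delta(\{0\})\supseteq\{0\}$ and the absence of non-zero nilpotents to control the vanishing configurations produced by a $\delta$-$(t,n)$-zero via Theorem \ref{8}—is the delicate point. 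Once the degenerate case $Q=\{0\}$ is handled, the contradiction is immediate and the corollary follows; the bulk of the work is therefore purely the mechanical chain Theorem \ref{9} $\Rightarrow Q\subseteq rad(0)\Rightarrow Q=\{0\}$, with the care concentrated entirely in the final identification.
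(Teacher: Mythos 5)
Your reduction is, up to the last step, exactly the paper's intended argument: the paper offers no proof beyond declaring the corollary an instant consequence of Theorem \ref{9} (through the preceding corollary $Q\subseteq rad(0)$), and your chain --- Theorem \ref{9} gives $k(Q^{(tn-t+1)})=0$, specializing all entries to a single $q\in Q$ gives $k_{(t)}(q^{(tn-t+1)})=0$, hence $q\in rad(0)$ by the radical characterization, hence $Q=\{0\}$ by reducedness --- is that argument made explicit. But the step you flag as the ``delicate point'' is a genuine gap, and it cannot be closed: the zero hyperideal of a reduced Krasner $(m,n)$-hyperring need not be $(t,n)$-absorbing $\delta$-semiprimary. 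Take $G=\mathbb{Z}_{30}$ viewed as a Krasner $(2,2)$-hyperring, $t=n=2$, and $\delta=\delta_0$ or $\delta_1$ (here $rad(\{0\})=\{0\}$ since $30$ is squarefree, so $G$ has no nonzero nilpotents): $k(2,3,5)=0$ while $k(2,3)=6$, $k(3,5)=15$, $k(2,5)=10$ all lie outside $\delta(\{0\})=\{0\}$, so $(2,3,5)$ is a $\delta$-$(2,2)$-zero of $\{0\}$, yet $\{0\}$ is vacuously weakly $(2,2)$-absorbing $\delta$-semiprimary. Your hoped-for appeal to Theorem \ref{8} cannot rescue this either: when $Q=\{0\}$, its conclusions of the form $k(\dots,Q^{(s)})=0$ hold trivially and exclude no configuration, so it gives you no leverage against the existence of a $\delta$-$(t,n)$-zero.

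What your analysis actually uncovers is that the contradiction must be extracted one step earlier than you place it. If $Q\neq\{0\}$, then the inclusion $Q\subseteq rad(0)=\{0\}$ is itself absurd, and the corollary follows with no need to decide whether $\{0\}$ is $(t,n)$-absorbing $\delta$-semiprimary; the statement provable from Theorem \ref{9} is therefore that every \emph{nonzero} weakly $(t,n)$-absorbing $\delta$-semiprimary hyperideal of a reduced $G$ is $(t,n)$-absorbing $\delta$-semiprimary, while the degenerate case $Q=\{0\}$ can genuinely fail, as the example above shows. So your instinct that the real obstacle lives in the final identification was correct, but the obstacle is fatal to the literal formulation rather than merely technical, and the paper's proofless ``instant consequence'' glosses over precisely the case you isolated.
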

The next theorem provides us how to determine weakly $(t,n)$-absorbing $\break$ $\delta$-semiprimary hyperideal  to be $(t,n)$-absorbing  $\delta$-semiprimary.
\begin{theorem}
Let $Q$ be a weakly $(t,n)$-absorbing $\delta$-semiprimary hyperideal of $G$ such that $\delta(Q)=\delta(0)$. Then $Q$ is not $(t,n)$-absorbing $\delta$-semiprimary if and only if there exists a $\delta$-$(t,n)$-zero of $0$.
\end{theorem}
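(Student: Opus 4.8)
The plan is to prove both implications by a direct definition chase, with the hypothesis $\delta(Q)=\delta(0)$ serving only to identify membership in $\delta(Q)$ with membership in $\delta(0)$, so that a witness for one notion transfers to the other. No induction or appeal to the earlier structural lemmas (Theorems \ref{str4}, \ref{8}) is needed here.

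For the easy implication ($\Longleftarrow$), I would start from a $\delta$-$(t,n)$-zero $(a_1,\cdots,a_{tn-t+1})$ of $0$. By definition $k(a_1^{tn-t+1})=0$, and since the zero element lies in every hyperideal we have $k(a_1^{tn-t+1})\in Q$. Again by definition, no $k$-product of $(t-1)n-t+2$ of the $a_i$'s lies in $\delta(0)$, and because $\delta(0)=\delta(Q)$ this says no such $k$-product lies in $\delta(Q)$. Thus the tuple witnesses a failure of the $(t,n)$-absorbing $\delta$-semiprimary condition for $Q$, so $Q$ is not $(t,n)$-absorbing $\delta$-semiprimary.

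For the forward implication ($\Longrightarrow$), suppose $Q$ is not $(t,n)$-absorbing $\delta$-semiprimary. Then there exist $a_1^{tn-t+1}\in G$ with $k(a_1^{tn-t+1})\in Q$ such that none of the $k$-products of $(t-1)n-t+2$ of the $a_i$'s lies in $\delta(Q)$. The decisive step, and the only genuinely load-bearing point in the argument, is to split on whether $k(a_1^{tn-t+1})$ equals $0$. If $k(a_1^{tn-t+1})\neq 0$, then $0\neq k(a_1^{tn-t+1})\in Q$, and since $Q$ is weakly $(t,n)$-absorbing $\delta$-semiprimary, some $k$-product of $(t-1)n-t+2$ of the $a_i$'s must lie in $\delta(Q)$, contradicting the choice of the tuple. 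Hence $k(a_1^{tn-t+1})=0$. Using $\delta(Q)=\delta(0)$, the fact that no $k$-product of $(t-1)n-t+2$ of the $a_i$'s lies in $\delta(Q)$ gives that none lies in $\delta(0)$ either, so $(a_1,\cdots,a_{tn-t+1})$ is exactly a $\delta$-$(t,n)$-zero of $0$. I do not anticipate any real obstacle beyond correctly invoking the weak hypothesis to eliminate the nonzero alternative; everything else is bookkeeping with the two definitions.
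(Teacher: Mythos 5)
Your proof is correct and follows essentially the same route as the paper: the forward direction uses the weak hypothesis to force $k(a_1^{tn-t+1})=0$ for a failing tuple and then transfers the witness via $\delta(Q)=\delta(0)$, which is exactly what the paper does (though it leaves the case-split implicit), and the reverse direction, which the paper dismisses as straightforward, is the same definition chase you carry out.
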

\begin{proof}
$\Longrightarrow $ Assume that $Q$ is not an $(t,n)$-absorbing $\delta$-semiprimary hyperideal of $G$. This implies that $k(a_1^{tn-t+1})=0$ and none $k$-product of the terms $(t-1)n-t+2$ of $a_i^,$s is in $\delta(Q)$ for some $a_1^{tn-t+1} \in G$. From $\delta(Q)=\delta(0)$, it follows that $(a_1^{tn-t+1})$ is a $\delta$-$(t,n)$-zero of $0$.

$\Longleftarrow$ Straightforward
\end{proof}

Let $(G_1,h_1,k_1)$ and  $(G_2,h_2,k_2)$ be two commutative Krasner $(m, n)$-hyperrings. Recall from \cite{d1} that a mapping
$f : G_1 \longrightarrow G_2$ is called a homomorphism if  we have
$f(h_1(a_1^m)) = h_2(f(a_1),\cdots,f(a_m))$
and
$f(k_1(b_1^n)) = k_2(f(b_1),...,f(b_n)) $ for all $a^m _1 \in G_1$ and $b^n_ 1 \in G_1$.
 Let $\delta$ and $\delta^{\prime}$ be hyperideal expansions   of $G_1$ and $G_2$, respectively. Recall from \cite{mah3} that $f: G_1 \longrightarrow G_2$ is called a $\delta \delta^{\prime}$-homomorphism if $\delta(f^{-1}(Q_2)) = f^{-1}(\delta^{\prime}(Q_2))$ for  hyperideal $Q_2$ of
$G_2$. 
 Note that $\delta^{\prime}(h(Q_1)=h(\delta(Q_1)$ for   $\delta \delta^{\prime}$-epimorphism $f$ and   for hyperideal $Q_1$ of $G_1$ with $ Ker (f) \subseteq Q_1$.
\begin{theorem} \label{home}
Let $(G_1,h_1,k_1)$ and $(G_2,h_2,k_2)$ be two Krasner $(m,n)$-hyperrings and $ f:G_1 \longrightarrow G_2$ be a $\delta\delta^{\prime}$-homomorphism. Then the followings hold: 
\begin{itemize}
\item[\rm(i)]~ If $Q_2$ is an $(t,n)$-absorbing $\delta^{\prime}$-semiprimary   hyperideal of $G_2$, then $f^{-1} (Q_2)$ is an $(t,n)$-absorbing $\delta$-semiprimary   hyperideal of $G_1$.
\item[\rm(ii)]~ If $Q_2$ is a weakly $(t,n)$-absorbing $\delta^{\prime}$-semiprimary   hyperideal of $G_2$ and $Ker f$ is a weakly $(t,n)$-absorbing $\delta$-semiprimary   hyperideal of $G_1$, then $f^{-1} (Q_2)$ is a weakly $(t,n)$-absorbing $\delta$-semiprimary   hyperideal of $G_1$.

 \item[\rm(iii)]~Let  $f $ be an epimorphism and $Q_1$ be a proper hyperideal of $G_1$ containing $Ker f$. If $Q_1$ is a (weakly) $(t,n)$-absorbing $\delta$-semiprimary hyperideal of $G_1$, then $f(Q_1)$ is a (weakly) $\delta^{\prime}$-semiprimary  hyperideal of $G_2$. 
 \end{itemize}
\end{theorem}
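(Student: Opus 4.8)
The plan is to handle the three parts by transporting the defining absorbing condition across $f$ using the homomorphism identity $f(k_1(a_1^n))=k_2(f(a_1),\dots,f(a_n))$ together with the two compatibility properties of a $\delta\delta'$-homomorphism recorded before the theorem, namely $\delta(f^{-1}(Q_2))=f^{-1}(\delta'(Q_2))$ and, for an epimorphism with $\ker f\subseteq Q_1$, the dual identity $\delta'(f(Q_1))=f(\delta(Q_1))$.

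For part (i), I would start with $a_1^{tn-t+1}\in G_1$ satisfying $k_1(a_1^{tn-t+1})\in f^{-1}(Q_2)$. Applying $f$ and using that $f$ is a homomorphism gives $k_2(f(a_1),\dots,f(a_{tn-t+1}))\in Q_2$. Since $Q_2$ is $(t,n)$-absorbing $\delta'$-semiprimary, some $k_2$-product of $(t-1)n-t+2$ of the $f(a_i)$'s lies in $\delta'(Q_2)$; rewriting this product as $f$ applied to the corresponding $k_1$-product and pulling back through $f^{-1}(\delta'(Q_2))=\delta(f^{-1}(Q_2))$ places that $k_1$-product in $\delta(f^{-1}(Q_2))$, which is exactly what is required. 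Properness of $f^{-1}(Q_2)$ is immediate from that of $Q_2$.

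Part (ii) splits into two cases according to whether the image $f(k_1(a_1^{tn-t+1}))$ is zero. Given $0\neq k_1(a_1^{tn-t+1})\in f^{-1}(Q_2)$: if $f(k_1(a_1^{tn-t+1}))\neq 0$ then $0\neq k_2(f(a_1),\dots,f(a_{tn-t+1}))\in Q_2$ and the argument of part (i), now invoking that $Q_2$ is \emph{weakly} $(t,n)$-absorbing $\delta'$-semiprimary, finishes; if instead $f(k_1(a_1^{tn-t+1}))=0$ then $0\neq k_1(a_1^{tn-t+1})\in\ker f$, so the hypothesis that $\ker f$ is weakly $(t,n)$-absorbing $\delta$-semiprimary yields a $k_1$-product of $(t-1)n-t+2$ of the $a_i$'s inside $\delta(\ker f)$. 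Since $\ker f=f^{-1}(0)\subseteq f^{-1}(Q_2)$ and $\delta$ is monotone, $\delta(\ker f)\subseteq\delta(f^{-1}(Q_2))$, and the conclusion follows. The extra hypothesis on $\ker f$ serves precisely to cover this second case, which is the only genuine obstacle in the weak setting.

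For part (iii), I would use surjectivity to lift: given $b_1^{tn-t+1}\in G_2$ with $k_2(b_1^{tn-t+1})\in f(Q_1)$ (and nonzero in the weak case), choose preimages $a_i$ with $f(a_i)=b_i$, so that $f(k_1(a_1^{tn-t+1}))=k_2(b_1^{tn-t+1})\in f(Q_1)$. Because $\ker f\subseteq Q_1$ one has $f^{-1}(f(Q_1))=Q_1$, hence $k_1(a_1^{tn-t+1})\in Q_1$, and it is nonzero in the weak case since $f(0)=0$. Applying that $Q_1$ is (weakly) $(t,n)$-absorbing $\delta$-semiprimary gives a $k_1$-product of $(t-1)n-t+2$ of the $a_i$'s in $\delta(Q_1)$; pushing this forward via $f(\delta(Q_1))=\delta'(f(Q_1))$ yields the corresponding $k_2$-product of the $b_i$'s in $\delta'(f(Q_1))$, establishing that $f(Q_1)$ is (weakly) $(t,n)$-absorbing $\delta'$-semiprimary. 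Properness of $f(Q_1)$ follows from $f^{-1}(f(Q_1))=Q_1\neq G_1$. The main technical points to verify carefully are the identity $f^{-1}(f(Q_1))=Q_1$ under $\ker f\subseteq Q_1$ and the two $\delta\delta'$-compatibility equalities, all of which are available from the setup preceding the theorem.
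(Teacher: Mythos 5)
Your proposal is correct and follows essentially the same route as the paper's proof: part (i) pulls the absorbing condition back through $\delta(f^{-1}(Q_2))=f^{-1}(\delta'(Q_2))$, part (ii) uses the identical case split on whether $f(k_1(a_1^{tn-t+1}))$ vanishes and invokes the hypothesis on $\mathrm{Ker}\, f$ in the zero case, and part (iii) lifts via surjectivity, uses $\mathrm{Ker}\, f\subseteq Q_1$ to land back in $Q_1$, and pushes forward with $f(\delta(Q_1))=\delta'(f(Q_1))$. Your write-up is in fact slightly more careful than the paper's (explicitly checking nonzeroness of the lifted product and properness of the image/preimage), but the argument is the same.
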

\begin{proof}
$(i)$ Let  $k_1(a_1^{tn-t+1}) \in f^{-1}(Q_2)$ for $a_1^{kn-k+1} \in G_1$. Then we get $f(k_1(a_1^{tn-t+1}))=k_2(f(a_1),...,f(a_{tn-t+1})) \in Q_2$. Since $Q_2$ is an $(t,n)$-absorbing $\delta^{\prime}$-semiprimary   hyperideal of $G_2$, then there exist $(t-1)n-t+2$ of $f(a_i)^,$s whose $k_2$-product is an element in $\delta^{\prime}(Q_2)$. It follows that the image $f$ of $(t-1)n-t+2$ of $a_i^,$ whose $k_2$-product is in $\delta^{\prime}(Q_2)$ which means there exist $(t-1)n-t+2$ of $a_i^,$ whose $k_1$-product is in $f^{-1}(\delta^{\prime}(Q_2))=\delta(f^{-1}(Q_2))$. Thus $f^{-1}(Q_2)$ is an $(t,n)$-absorbing $\delta$-semiprimary   hyperideal of $G_1$.

$(ii)$ Assume that  $k_1(a_1^{tn-t+1}) \in f^{-1}(Q_2)$ for $a_1^{kn-k+1} \in G_1$. Therefore $f(k_1(a_1^{tn-t+1}))=k_2(f(a_1),...,f(a_{tn-t+1})) \in Q_2$. If $0 \neq f(k_1(a_1^{tn-t+1}))$, then it can be proved
by using an argument similar to that in the proof of the part (i). Let us assume that $ f(k_1(a_1^{tn-t+1})) =0$. Then we obtain $k_1(a_1^{tn-t+1}) \in Ker f$. Since $Ker f$ is a weakly  $(t,n)$-absorbing $\delta$-semiprimary   hyperideal of $G_1$, then there exist $(t-1)n-t+2$ of $a_i^,$s whose $k_1$-product is an element in $\delta(Ker f)$. From $\delta(Ker f)=\delta(f^{-1}(0)) \subseteq \delta(f^{-1}(Q_2))$, it follows that $f^{-1}(Q_2)$ is a weakly $(t,n)$-absorbing $\delta$-semiprimary   hyperideal of $G_1$.
 
 $(iii)$ Let ($0 \neq k_2(b_1^{tn-t+1}) \in f(Q_1)$) $k_2(b_1^{tn-t+1}) \in f(Q_1)$ for some $b_1^{tn-t+1} \in G_2$. Since $f $ be an epimorphism, then there exist $a_i \in G_1$ for each $1 \leq i \leq tn-t+1$ such that $f(a_i)=b_i$. Hence $k_2(b_1^{tn-t+1})=k_2(f(a_1),\cdots,f(a_{tn-t+1}))=f(k_1(a_1^{tn-t+1})) \in f(Q_1)$. Since $Q_1$ containing $Ker f$, we conclude that ($0 \neq k_1(a_1^{tn-t+1}) \in Q_1$) $ k_1(a_1^{tn-t+1}) \in Q_1$. As $Q_1$ is a (weakly) $(t,n)$-absorbing $\delta$-semiprimary hyperideal of $G_1$, then there exist $(t-1)n-t+2$ of $a_i^,$s whose $k_1$-product is in $\delta(Q_1)$. Now, since $f$ is a homomorphism and $f(\delta(Q_1))=\delta^{\prime}(f(Q_1))$,  the proof is
completed.
\end{proof}

Let $P$ be a hyperideal of  $(G, h, k)$. Then the set
$G/P = \{h(g^{i-1}_1, P, g^m_{i+1}) \ \vert \  g^{i-1}_1,g^m_{i+1} \in G \}$ with $h$ and $k$ which are defind by

$\hspace{2cm}h(h(g_{11}^{1 (i-1)}, P, g^{1m}_ {1(i+1)}),..., h(g_{m1}^{ m(i-1)}, P, g^{mm}_ {m(i+1)}))$ 

$\hspace{2.5cm}= h (h(g^{m1}_{11}),..., h(g^{m(i-1)}_{1(i-1)}), P, h(g^{m(i+1)}_{1(i+1)} ),..., h(g^{mm}_ {1m}))$\\
and

$\hspace{2cm}k(h(g_{11}^{1 (i-1)}, P, g^{1m}_ {1(i+1)}),..., h(g_{n1}^{ n(i-1)}, P, g^{nm}_ {n(i+1)}))$ 

$\hspace{2.5cm}= h (k(g^{n1}_{11}),..., k(g^{n(i-1)}_{1(i-1)}), P, k(g^{n(i+1)}_{1(i+1)} ),..., k(g^{nm}_ {1m}))$\\
 for all $g_{11}^{1m},...,g_{m1}^{mm} \in G$ and $g_{11}^{1m},...,g_{n1}^{nm} \in G$, construct a Krasner $(m, n)$-hyperring \cite{sorc1}. 
 \begin{theorem}
Let $P$ and $Q$ be two proper hyperideals of $G$ with $P \subseteq Q$. If $Q$ is an  $(t,n)$-absorbing $\delta$-semiprimary hyperideal of $G$, then $Q/P$ is an $(t,n)$-absorbing $\delta_q$-semiprimary hyperideal of $G/P$.
\end{theorem}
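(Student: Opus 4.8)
The plan is to use the natural projection $\pi : G \longrightarrow G/P$, $x \mapsto \bar{x}$, where $\bar{x}$ denotes the coset $h(x,P,0^{(m-2)})$, and to transfer the absorbing condition back and forth between $G$ and $G/P$ along this map. First I would isolate the two identities that drive everything. Since $P \subseteq Q$, a coset $\bar{x}$ lies in $Q/P$ if and only if $x \in Q$; this is the standard correspondence $\pi^{-1}(Q/P) = Q$. Second, from the definition of the $n$-ary operation on $G/P$ given just before the statement, one has the product rule $k(\bar{x}_1,\dots,\bar{x}_n) = \overline{k(x_1^n)}$ for all $x_1^n \in G$, so a product of cosets is computed by lifting to representatives, multiplying in $G$, and projecting back.

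With these in hand the argument is essentially verbatim transfer. Suppose $k(\bar{a}_1^{tn-t+1}) \in Q/P$ for cosets $\bar{a}_1^{tn-t+1} \in G/P$. By the product rule this reads $\overline{k(a_1^{tn-t+1})} \in Q/P$, and by the correspondence this is equivalent to $k(a_1^{tn-t+1}) \in Q$. Since $Q$ is an $(t,n)$-absorbing $\delta$-semiprimary hyperideal of $G$, there exist $(t-1)n-t+2$ of the $a_i$ whose $k$-product lies in $\delta(Q)$. Projecting that $k$-product and applying the product rule once more, the $k$-product of the same $(t-1)n-t+2$ cosets $\bar{a}_i$ lies in $\delta(Q)/P$, which by the definition of $\delta_q$ is exactly $\delta_q(Q/P)$. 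Finally $Q/P$ is proper, because $Q \neq G$ forces $Q/P \neq G/P$. Hence $Q/P$ is an $(t,n)$-absorbing $\delta_q$-semiprimary hyperideal of $G/P$.

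The hard part is purely bookkeeping rather than genuine mathematics: one must carefully extract the clean product rule $k(\bar{x}_1^n) = \overline{k(x_1^n)}$ from the somewhat heavy $m$-ary quotient formulas recorded before the statement, and confirm $\bar{x} \in Q/P \iff x \in Q$, which is where the hypothesis $P \subseteq Q$ is used. Once these two identities are established, the defining implication for $Q$ transfers directly to $Q/P$, and no further manipulation of the hyperoperation $h$ is required; in particular I would not expect any case analysis or computation beyond invoking the definition of $\delta_q$ to identify $\delta_q(Q/P)$ with $\delta(Q)/P$.
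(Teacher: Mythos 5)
Your proof is correct and is essentially the paper's argument: the paper's one-line proof applies its earlier theorem on images under $\delta\delta^{\prime}$-epimorphisms (part (iii) of the homomorphism theorem) to the natural projection $\pi : G \longrightarrow G/P$, whose kernel is $P \subseteq Q$ and which satisfies $\pi(\delta(Q)) = \delta(Q)/P = \delta_q(Q/P)$. You simply inline that transfer argument, and the two identities you isolate (the correspondence $\bar{x} \in Q/P \iff x \in Q$ and the product rule $k(\bar{x}_1^{n}) = \overline{k(x_1^{n})}$) are exactly what make the cited theorem applicable.
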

\begin{proof}
By considering the  natural homomorphism $\pi : G \longrightarrow G/P$, defined by $\pi(a)=f(a,P,0^{(m-2)})$ and using Theorem \ref{home}, we are done.
\end{proof}
\begin{theorem}
Let $Q$ be an  $(t,n)$-absorbing $\delta$-semiprimary hyperideal of $G$. If $G^{\prime}$ is a subhyperring of $G$ such that $G^{\prime} \nsubseteq Q$, then $Q \cap G^{\prime}$ is an  $(t,n)$-absorbing $\delta$-semiprimary hyperideal of $G^{\prime}$. 
\end{theorem}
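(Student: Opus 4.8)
The plan is to show first that $Q \cap G^{\prime}$ is a proper hyperideal of $G^{\prime}$, and then to pull the $(t,n)$-absorbing $\delta$-semiprimary condition down from $G$ to $G^{\prime}$ by exploiting that $G^{\prime}$, being a subhyperring, is closed under both $h$ and $k$. First I would record that $(Q \cap G^{\prime}, h)$ is an $m$-ary subhypergroup of $(G^{\prime}, h)$ and that $k(g_1^{i-1}, Q \cap G^{\prime}, g_{i+1}^n) \subseteq Q \cap G^{\prime}$ for all $g_j \in G^{\prime}$, which is routine since $Q$ absorbs products in $G$ and $G^{\prime}$ is closed under $k$. Properness is where the hypothesis $G^{\prime} \nsubseteq Q$ enters: pick $g \in G^{\prime} \setminus Q$, so that $g \in G^{\prime} \setminus (Q \cap G^{\prime})$ and hence $Q \cap G^{\prime} \neq G^{\prime}$.

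For the main step, suppose $k(a_1^{tn-t+1}) \in Q \cap G^{\prime}$ with $a_1^{tn-t+1} \in G^{\prime}$. In particular $k(a_1^{tn-t+1}) \in Q$ and each $a_i \in G^{\prime} \subseteq G$, so the hypothesis that $Q$ is an $(t,n)$-absorbing $\delta$-semiprimary hyperideal of $G$ yields $(t-1)n-t+2$ of the $a_i$'s whose $k$-product lies in $\delta(Q)$. Since all these $a_i$ belong to $G^{\prime}$ and $G^{\prime}$ is closed under $k$, that same $k$-product lies in $G^{\prime}$ as well, hence in $\delta(Q) \cap G^{\prime}$. Thus there are always $(t-1)n-t+2$ of the arguments whose $k$-product lands in $\delta(Q) \cap G^{\prime}$, which is exactly the absorbing conclusion once the expansion on $G^{\prime}$ is identified correctly.

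The main obstacle, and the only genuinely nontrivial point, is the bookkeeping for the expansion: $\delta$ is a hyperideal expansion of $G$, so literally it is defined on $\mathcal{HI}(G)$, whereas the statement speaks of $\delta$-semiprimariness in $G^{\prime}$. I would make this precise by taking the induced expansion $\delta_{G^{\prime}}$ on $G^{\prime}$ given by $\delta_{G^{\prime}}(Q \cap G^{\prime}) = \delta(Q) \cap G^{\prime}$, and more generally $\delta_{G^{\prime}}(J) = \delta(I) \cap G^{\prime}$ for a hyperideal $J = I \cap G^{\prime}$ of $G^{\prime}$ arising from a hyperideal $I$ of $G$. One then checks that $\delta_{G^{\prime}}$ is inflationary, i.e.\ $Q \cap G^{\prime} \subseteq \delta(Q) \cap G^{\prime}$ (immediate from $Q \subseteq \delta(Q)$), and monotone, so that it is a legitimate hyperideal expansion of $G^{\prime}$. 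With this identification the conclusion of the previous paragraph reads that the $k$-product of $(t-1)n-t+2$ of the $a_i$'s lies in $\delta_{G^{\prime}}(Q \cap G^{\prime})$, which is precisely the definition of $Q \cap G^{\prime}$ being an $(t,n)$-absorbing $\delta$-semiprimary hyperideal of $G^{\prime}$. If instead one reads the theorem as assuming an externally given expansion $\delta$ on $G^{\prime}$ compatible with the one on $G$ in the sense $\delta(Q) \cap G^{\prime} \subseteq \delta_{G^{\prime}}(Q \cap G^{\prime})$, the same chain of inclusions closes the argument verbatim.
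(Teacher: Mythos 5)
Your proof is correct and is essentially the paper's argument unpacked: the paper's entire proof is a citation of Theorem~\ref{home}(i) applied to the inclusion $\iota : G^{\prime} \hookrightarrow G$, for which $\iota^{-1}(Q) = Q \cap G^{\prime}$ and the $\delta\delta^{\prime}$-homomorphism condition is exactly your induced expansion $\delta_{G^{\prime}}(I \cap G^{\prime}) = \delta(I) \cap G^{\prime}$. Your explicit handling of properness (via $G^{\prime} \nsubseteq Q$) and of which expansion lives on $G^{\prime}$ is, if anything, more careful than the paper, which leaves both points implicit.
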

\begin{proof}
It follows by Theorem \ref{home}.
\end{proof}
\begin{theorem} 
 Let $\delta_1$ and $\delta_2$ be two hyperideal expansions of Krasner $(m,n)$-hyperrings $G_1$ and $G_2$, respectively,   such that $\delta(Q_1 \times  Q_2)=\delta_1(Q_1) \times  \delta_2(Q_2)$ for  hyperideals $Q_1$ and $Q_2$ of $G_1$ and $G_2$, respectively. If $Q=Q_1 \times G_2$ is a weakly  $(t,n)$-absorbing $\delta$-semiprimary hyperideal of $G_1  \times G_2$,
 then it is an $(t,n)$-absorbing $\delta$-semiprimary hyperideal of $G_1  \times G_2$.
\end{theorem}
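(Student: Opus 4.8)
The plan is to argue by contradiction and to invoke Theorem~\ref{9}, which is exactly the tool that converts the failure of the ordinary property into a vanishing-product statement. Suppose that $Q = Q_1 \times G_2$ is weakly $(t,n)$-absorbing $\delta$-semiprimary but is \emph{not} $(t,n)$-absorbing $\delta$-semiprimary. Then Theorem~\ref{9} forces $k(Q^{(tn-t+1)}) = 0$; that is, the $k$-product (in its $t$-fold iterated form, since $tn-t+1 = t(n-1)+1$) of any $tn-t+1$ elements of $Q$ must equal the zero element $(0,0)$ of $G_1 \times G_2$. The strategy is then to exhibit a single choice of $tn-t+1$ elements of $Q$ whose $k$-product is nonzero, contradicting this identity and thereby forcing $Q$ to be $(t,n)$-absorbing $\delta$-semiprimary.

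To produce such a choice, I would exploit the fact that the entire second factor $G_2$ sits inside $Q$. Concretely, take the $tn-t+1$ elements all equal to $(0_{G_1}, 1_{G_2})$, each of which lies in $Q = Q_1 \times G_2$ because $0_{G_1} \in Q_1$ and $1_{G_2} \in G_2$. Since $k = k_1 \times k_2$ acts componentwise, the second coordinate of the resulting iterated product is $k_2$ applied to $tn-t+1$ copies of $1_{G_2}$. The one computation to verify carefully here is that the scalar identity is absorbed by the iterated operation, i.e. that repeated application of $k_2$ to copies of $1_{G_2}$ returns $1_{G_2}$; this follows directly from the defining property of the scalar identity $1_{G_2}$ of $G_2$. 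Consequently this particular $k$-product equals $(0_{G_1}, 1_{G_2})$.

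Because $1_{G_2} \neq 0_{G_2}$, the element $(0_{G_1}, 1_{G_2})$ is a nonzero member of $k(Q^{(tn-t+1)})$, which contradicts $k(Q^{(tn-t+1)}) = 0$. Hence the assumption is untenable, and $Q = Q_1 \times G_2$ must be an $(t,n)$-absorbing $\delta$-semiprimary hyperideal of $G_1 \times G_2$. I do not expect any genuine obstacle beyond the bookkeeping that the $tn-t+1$ copies of $1_{G_2}$ really collapse to $1_{G_2}$ under the iterated operation $k_2$; the rest is an immediate application of Theorem~\ref{9}. It is worth remarking that the compatibility hypothesis $\delta(Q_1 \times Q_2) = \delta_1(Q_1) \times \delta_2(Q_2)$ plays no role in this particular argument, since the contradiction is extracted purely from the zero-product conclusion of Theorem~\ref{9} together with the presence of the scalar identity in the full factor $G_2$.
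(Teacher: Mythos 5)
Your proof is correct and takes essentially the same route as the paper: both arguments reduce the statement to Theorem~\ref{9} via the observation that $k(Q^{(tn-t+1)}) \neq 0$ when the second factor is all of $G_2$. You in fact supply the explicit nonzero witness $(0_{G_1},1_{G_2})$ that the paper leaves unstated, which makes the argument slightly more complete than the original.
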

\begin{proof}
Assume that $Q_1 \times G_2$ is a weakly  $(t,n)$-absorbing $\delta$-semiprimary hyperideal of $G_1  \times G_2$. Since $k(Q^{(tn-t+1)}) \neq 0$, we conclude that $Q=Q_1 \times G_2$ is an $(t,n)$-absorbing $\delta$-semiprimary hyperideal of $G_1  \times G_2$ by Theorem \ref{9}.
\end{proof}
We say that $\delta$  has $(\mathfrak{P})$ property  if it satisfies the  condition:  $\delta(Q)=G$ if and only if $Q=G$ for all hyperideals $Q$ of $G$.
\begin{theorem} \label{cart2}
 Let $\delta_1,\cdots, \delta_{tn-t+1}$ be hyperideal expansions of Krasner $(m,n)$-hyperrings $G_1,\cdots,G_{tn-t+1}$  such that each $\delta_i$ has $(\mathfrak{P})$ property and  $\delta(Q_1 \times \cdots \times Q_{tn-t+1})=\delta_1(Q_1) \times \cdots \times \delta_{tn-t+1}(Q_{tn-t+1})$ for  hyperideals $Q_1,\cdots,Q_{tn-t+1}$ of $G_1,\cdots,G_{tn-t+1}$, respectively. If $Q=Q_1 \times \cdots \times Q_{tn-t+1}$ is a weakly $(t,n)$-absorbing $\delta$-semiprimary hyperideal of $G=G_1 \times ... \times G_{tn-t+1}$, then $Q$ is an $(t,n)$-absorbing $\delta$-semiprimary hyperideal of $G=G_1 \times ... \times G_{tn-t+1}$.
 \end{theorem}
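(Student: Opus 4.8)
The plan is to argue by contraposition through Theorem \ref{9}. Suppose $Q=Q_1\times\cdots\times Q_{tn-t+1}$ is weakly $(t,n)$-absorbing $\delta$-semiprimary but fails to be $(t,n)$-absorbing $\delta$-semiprimary. Then Theorem \ref{9} forces $k(Q^{(tn-t+1)})=0$. Since $k=k_1\times\cdots\times k_{tn-t+1}$ acts componentwise, this says $k_j(Q_j^{(tn-t+1)})=0$ in $G_j$ for every $1\le j\le tn-t+1$. If some $Q_{j_0}=G_{j_0}$ we would get $1_{G_{j_0}}\in k_{j_0}(G_{j_0}^{(tn-t+1)})=0$, which is impossible since $1_{G_{j_0}}\ne 0$; hence every $Q_j$ is a proper hyperideal of $G_j$. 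Invoking the $(\mathfrak{P})$ property of each $\delta_j$, properness of $Q_j$ yields $\delta_j(Q_j)\ne G_j$, so $\delta_j(Q_j)$ is a proper hyperideal and therefore $1_{G_j}\notin\delta_j(Q_j)$ for all $j$. This last implication is exactly where $(\mathfrak{P})$ enters.

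The heart of the proof is to manufacture a witness contradicting the weak hypothesis. Assuming $Q\ne 0$ (the degenerate case $Q=0$ being treated separately), some $Q_{j_0}\ne 0$. For each $i$ put $a_i=(1_{G_1},\dots,1_{G_{i-1}},c_i,1_{G_{i+1}},\dots,1_{G_{tn-t+1}})\in G$, where $c_{j_0}\in Q_{j_0}\setminus\{0\}$ and $c_j=0$ for $j\ne j_0$. Because $0$ is a zero element and $1_{G_j}$ is a scalar identity for $k_j$, the full $k$-product of $a_1,\dots,a_{tn-t+1}$ equals $(c_1,\dots,c_{tn-t+1})$, a nonzero element of $Q$. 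Now consider any $k$-product of $(t-1)n-t+2=(t-1)(n-1)+1$ of the $a_i$'s, equivalently the product obtained by deleting a set $S$ of indices with $|S|=(tn-t+1)-((t-1)n-t+2)=n-1$. For an index $j\in S$ the factor $a_j$ is deleted, so every remaining factor carries $1_{G_j}$ in its $j$-th coordinate, whence the $j$-th coordinate of this subproduct is $1_{G_j}$. As $|S|=n-1\ge 1$ such a $j$ always exists, and $1_{G_j}\notin\delta_j(Q_j)$ shows the subproduct lies outside $\delta(Q)=\delta_1(Q_1)\times\cdots\times\delta_{tn-t+1}(Q_{tn-t+1})$. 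Thus no $(t-1)n-t+2$ of the $a_i$'s have their $k$-product in $\delta(Q)$, while $0\ne k(a_1^{tn-t+1})\in Q$, contradicting that $Q$ is weakly $(t,n)$-absorbing $\delta$-semiprimary. Hence $Q$ is $(t,n)$-absorbing $\delta$-semiprimary.

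The step I expect to be most delicate is the combinatorial bookkeeping in the second paragraph: one must verify that every admissible subproduct—there are $\binom{tn-t+1}{\,n-1\,}$ of them—retains at least one coordinate equal to a scalar identity. This is guaranteed precisely by the match between the number of factors $tn-t+1$ and the length of the product, so that deleting only $n-1\ge 1$ indices can never remove all identity coordinates. The other essential ingredient is the $(\mathfrak{P})$ property: without it a proper $Q_j$ could have $\delta_j(Q_j)=G_j$, and the crucial relation $1_{G_j}\notin\delta_j(Q_j)$ would fail, collapsing the witness argument. The only situation escaping this construction is the degenerate $Q=0$, where no nonzero $c_{j_0}$ is available; this is disposed of separately and trivially, so the argument above settles the theorem.
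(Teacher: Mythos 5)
Your proposal is correct and takes essentially the same route as the paper's own proof: the same witness elements carrying a scalar identity in all but one coordinate, the same observation that deleting only $n-1$ indices leaves every subproduct of $(t-1)n-t+2$ factors with an identity coordinate, the same use of the $(\mathfrak{P})$ property to pass between $\delta_j(Q_j)=G_j$ and $Q_j=G_j$, and the same appeal to Theorem \ref{9}; you merely organize it contrapositively where the paper argues directly that some $Q_j=G_j$ forces $k(Q^{(tn-t+1)})\neq 0$. The only divergence is your explicit deferral of the case $Q=0$ (where no nonzero witness exists); the paper's proof silently assumes such a witness is available, so in substance the two arguments coincide there as well, though neither actually disposes of that degenerate case.
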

 \begin{proof}
 Let $Q$ is a weakly $(t,n)$-absorbing $\delta$-semiprimary hyperideal of $G$. Let us consider the following elements of $G$: $x_i=(1_{G_1}.\cdots,1_{G_{i-1}},a_i,1_{G_{i+1}},\cdots,1_{G_{tn-t+1}})$ for all $1 \leq i \leq tn-t+1$. Then we have $0 \neq k(x_1^{tn-t+1}) \in Q$. Since $Q=Q_1 \times \cdots \times Q_{tn-t+1}$ is a weakly $(t,n)$-absorbing $\delta$-semiprimary hyperideal of $G=G_1 \times ... \times G_{tn-t+1}$, then there exists $(t-1)n-t+2$ of the $x_i^,$s whose $k$-product is in $\delta(Q)=\delta_1(Q_1) \times \cdots \times \delta_{tn-t+1}(Q_{tn-t+1})$. This implies that there exists some $1 \leq j \leq tn-t+1$ such that $1_{G_j} \in \delta_j(Q_j)$ which means $\delta_j(Q_j)=G_j$. Since $\delta_j$ has $(\mathfrak{P})$ property, then $Q_j=G_j$. Hence we conclude that $k(Q^{(tn-t+1)}) \neq 0$ which implies $Q$ is an $(t,n)$-absorbing $\delta$-semiprimary hyperideal of $G$ by Theorem \ref{9}.
 \end{proof}
 \begin{theorem}
 Let $\delta_1,\cdots, \delta_{tn-t+1}$ be hyperideal expansions of Krasner $(m,n)$-hyperrings $G_1,\cdots,G_{tn-t+1}$  such that each $\delta_i$ has $(\mathfrak{P})$ property and $\delta(Q_1 \times \cdots \times Q_{tn-t+1})=\delta_1(Q_1) \times \cdots \times \delta_{tn-t+1}(Q_{tn-t+1})$ for  hyperideals $Q_1,\cdots,Q_{tn-t+1}$ of $G_1,\cdots,G_{tn-t+1}$, respectively. If $Q=Q_1 \times \cdots \times Q_{tn-t+1}$ is a weakly $(t+1,n)$-absorbing $\delta$-semiprimary hyperideal of $G=G_1 \times ... \times G_{tn-t+1}$, then either there exists $1 \leq u \leq tn-t+1$ such that $Q_u$ is an $(t+1,n)$-absorbing $\delta$-semiprimary hyperideal of $G_u$ and $Q_i=G_i$ for each $1 \leq i \leq tn-t+1$ and $i \neq u$ or $Q_u$ and $Q_v$ are $(t,n)$-absorbing $\delta_{u,v}$-semiprimary hyperideals of $G_u$ and $G_v$, respectively, for some $u,v \in \{1,\cdots, tn-t+1\}$ and $Q_i=G_i$ for all $1 \leq i \leq tn-t+1$ but $i \neq u,v$. 
 \end{theorem}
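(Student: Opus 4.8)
The plan is to notice that the asserted dichotomy is, word for word, the conclusion of Theorem \ref{cart3}, with every instance of $\delta_i(Q_i)=G_i$ replaced by the stronger $Q_i=G_i$. Since each $\delta_i$ enjoys property $(\mathfrak{P})$, we have $\delta_i(Q_i)=G_i$ exactly when $Q_i=G_i$; so once the hypotheses of Theorem \ref{cart3} are in force I would simply apply that theorem and rewrite each $\delta_i(Q_i)=G_i$ as $Q_i=G_i$. The whole burden therefore shifts to a single reduction: promoting the standing \emph{weakly} $(t+1,n)$-absorbing hypothesis on $Q$ to the genuine $(t+1,n)$-absorbing hypothesis demanded by Theorem \ref{cart3}.

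For that upgrade I would lean on Theorem \ref{9}, whose contrapositive says that a weakly $(t+1,n)$-absorbing $\delta$-semiprimary hyperideal is already $(t+1,n)$-absorbing as soon as $k(Q^{((t+1)n-t)})\neq 0$; so it suffices to produce a nonzero $((t+1)n-t)$-fold $k$-power. Mimicking the witness construction in the proof of Theorem \ref{cart2}, I would pick $a_j\in Q_j$ with at least one $a_j\neq 0$, let $x_i$ carry $a_i$ in coordinate $i$ and the scalar identity in every other coordinate for $1\le i\le tn-t+1$, and pad the list with copies of $1_G$ up to length $(t+1)n-t$, so that $0\neq k(x_1^{(t+1)n-t})=(a_1,\dots,a_{tn-t+1})\in Q$. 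The weakly $(t+1,n)$-absorbing property forces the $k$-product of some $tn-t+1$ of these elements into $\delta(Q)=\delta_1(Q_1)\times\cdots\times\delta_{tn-t+1}(Q_{tn-t+1})$; if an omitted element sits in a genuine coordinate $j$, then $1_{G_j}\in\delta_j(Q_j)$, hence $\delta_j(Q_j)=G_j$ and, by $(\mathfrak{P})$, $Q_j=G_j$. The element $e_j$ equal to $1_{G_j}$ in coordinate $j$ and $0$ elsewhere then lies in $Q$ with $k(e_j^{((t+1)n-t)})=e_j\neq 0$, so $k(Q^{((t+1)n-t)})\neq 0$, Theorem \ref{9} applies, and Theorem \ref{cart3} together with $(\mathfrak{P})$ finishes the argument.

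The hard part will be guaranteeing that nonzero power, i.e. excluding the degenerate case $k(Q^{((t+1)n-t)})=0$, and the counting makes this delicate. The $(t+1,n)$-absorbing condition discards exactly $n-1$ of the $(t+1)n-t$ factors, and since the padding block also has length $n-1$, the omitted factors may be precisely the padding copies of $1_G$, leaving the retained product equal to the harmless $(a_1,\dots,a_{tn-t+1})$ and forcing no coordinate to be full; indeed, because the surplus $\sum_j(c_j-1)$ equals the number of omissions, one can always keep each coordinate nonempty, so the diagonal witness alone cannot force a full coordinate. I expect this to be the genuine obstacle.

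My intended resolution is to attack the degenerate case head-on through Theorem \ref{9}: if $Q$ failed to be $(t+1,n)$-absorbing, then $k(Q^{((t+1)n-t)})=0$, and restricting to each coordinate (via elements supported on slot $j$) gives $k(Q_j^{((t+1)n-t)})=0$; a full coordinate $Q_j=G_j$ would then force $1_{G_j}=0$, so in this situation every $Q_j$ is proper. Because both branches of the dichotomy require $Q_i=G_i$ for all but at most two indices, this nilpotent configuration is incompatible with the claimed conclusion whenever there are at least three nontrivial factors, which is exactly the leverage I would use to exclude it; the residual small-factor configurations would then be dispatched directly, as in the case analysis of Theorem \ref{cart}. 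Pinning down that this degenerate case does not arise under the stated hypotheses is, to my mind, the crux of the whole proof.
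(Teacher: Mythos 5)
Your high-level plan coincides with the paper's own two-line proof: the paper simply invokes Theorem \ref{cart2} to promote the weak hypothesis to the genuine $(t+1,n)$-absorbing one and then cites Theorem \ref{cart3}, with property $(\mathfrak{P})$ silently converting each $\delta_i(Q_i)=G_i$ into $Q_i=G_i$. Where you go beyond the paper is in noticing that Theorem \ref{cart2} does not actually apply here: it ties the parameter $t$ to exactly $tn-t+1$ factors, whereas in this theorem the parameter is $t+1$ but there are still only $tn-t+1$ factors, so the diagonal witness $x_1,\dots,x_{tn-t+1}$ must be padded with $n-1$ copies of $1_G$, and the weakly absorbing condition can then be satisfied by discarding precisely the padding, yielding no information about any $\delta_j(Q_j)$. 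You have correctly located the crux. However, your proposed treatment of the degenerate case $k(Q^{((t+1)n-t)})=0$ is circular: you argue that in this case every $Q_j$ is proper and that this ``is incompatible with the claimed conclusion,'' but a configuration cannot be excluded on the grounds that it would falsify the statement being proved. As written, the proposal does not close the gap, and you acknowledge as much.

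In fact the gap cannot be closed, because the degenerate case really occurs and the statement fails there. Take $t=n=2$, so $tn-t+1=3$; let each $G_i$ be the ring $\mathbb{Z}_4$ regarded as a Krasner $(2,2)$-hyperring, $Q_i=\{0,2\}$ and $\delta_i=\delta_0$ (which has property $(\mathfrak{P})$ and is compatible with products). If a product of four elements of $G=\mathbb{Z}_4^{3}$ lies in $Q=Q_1\times Q_2\times Q_3$, then each of the three coordinates admits at least one factor with an even entry there; since there are only three coordinates, one of the four factors can be discarded while keeping such a witness in every coordinate, so the product of the remaining three lies in $Q=\delta_0(Q)$. Hence $Q$ is (even non-weakly) $(3,2)$-absorbing $\delta_0$-semiprimary, yet all three $Q_i$ are proper, so neither branch of the asserted dichotomy holds; note that $k(Q^{(4)})=0$ here because $2\cdot 2=0$, which is exactly your degenerate case. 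So the obstacle you isolated is a defect of the theorem itself (the same example also contradicts Theorem \ref{cart3}), not merely of your argument, and no case analysis on small factors will rescue the reduction without adding hypotheses.
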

\begin{proof}
Let $Q=Q_1 \times \cdots \times Q_{tn-t+1}$ be a weakly $(t+1,n)$-absorbing $\delta$-semiprimary hyperideal of $G=G_1 \times ... \times G_{tn-t+1}$. Therefore we conclude that  $Q$ is an $(t+1,n)$-absorbing $\delta$-semiprimary hyperideal of $G$ by Theorem \ref{cart2}. Now, by using Theorem \ref{cart3}, we are done.
\end{proof}

\section{conclusion}
In this paper, our purpose was to study the structure of  $(t,n)$-absorbing $\delta$-semiprimary hyperideals  which is more general than    $\delta$-primary hyperideals. Additionally, we generalized the notion to weakly $(t,n)$-absorbing $\delta$-semiprimary hyperideals.
We gave many special  results illustrating the structures.  Indeed, this paper makes a major contribution to classify hyperideals in Krasner $(m,n)$-hyperrings.

\vspace{0.5cm}
 
{\bf  Conflicts of Interest} 

The authors declare that they have no conflicts of interest.

%%%%%%%%%%%%%%%%%%%%%%%%%%%%%%%%%%%%%%%%%%%
%%%%%%%%%%%%%%%%%%%%%%%%%

\end{document}